\newcommand{\langue}{anglais}	% langue de rédaction de l'article
	\newcommand{\theoremenom}{Théorème}
	\newcommand{\propositionnom}{Proposition}
	\newcommand{\lemmenom}{Lemme}
	\newcommand{\corollairenom}{Corollaire}
	\newcommand{\definitionnom}{Définition}
	\newcommand{\remarquenom}{Remarque}
	\newcommand{\exemplenom}{Exemple}
	\newcommand{\conjecturenom}{Conjecture}
	\newcommand{\theoremenom}{Theorem}
	\newcommand{\propositionnom}{Proposition}
	\newcommand{\lemmenom}{Lemma}
	\newcommand{\corollairenom}{Corollary}
	\newcommand{\definitionnom}{Definition}
	\newcommand{\remarquenom}{Remark}
	\newcommand{\exemplenom}{Example}
	\newcommand{\conjecturenom}{Conjecture}
\newtheorem{theoreme}{\theoremenom}
\newtheorem{proposition}[theoreme]{\propositionnom}
\newtheorem{lemme}[theoreme]{\lemmenom}
\newtheorem{corollaire}[theoreme]{\corollairenom}
\newtheorem{definition}[theoreme]{\definitionnom}
\newtheorem{remarque}[theoreme]{\remarquenom}
\newtheorem*{conjecture}{\conjecturenom}
\def\cleartheorem#1{%
    \expandafter\let\csname#1\endcsname\relax
    \expandafter\let\csname c@#1\endcsname\relax
}
\newcommand{\compteurThm}{1}
\newcounter{annexe}
\newcommand{\R}{\mathbb{R}}
\newcommand{\N}{\mathbb{N}}
\newcommand{\1}{\mathbb{1}}
\begin{document}

\pagestyle{empty} %No headings for the first pages.

%% Title Page %%%%%%%%%%%%%%%%%%%%%%%%%%%%%%%%%%%%%%%%%%%%%%%
%% ==> Write your text here or include other files.

%% The simple version:
\title{Towards the optimality of the ball for the Rayleigh Conjecture concerning the clamped plate}
\author{
Roméo Leylekian
\footnote{Aix-Marseille Université, CNRS, I2M, Marseille, France - \textbf{email:} romeo.leylekian@univ-amu.fr}
}
\date{} %%If commented, the current date is used.
\maketitle

%% The nice version:
%\input{page_de_garde} %%You need a file 'page_de_garde.tex' for this.
%% ==> TeXnicCenter supplies a possible titlepage file
%% ==> with its templates (File | New from Template...).

%% Abstract
\begin{abstract}
In 1995, Nadirashvili and subsequently Ashbaugh and Benguria proved the Rayleigh Conjecture concerning the first eigenvalue of the bilaplacian with clamped boundary conditions in dimension $2$ and $3$. Since then, the conjecture has remained open in dimension $d>3$. In this document, we contribute in answering the conjecture under a particular assumption regarding the critical values of the optimal eigenfunction. More precisely, we prove that if the optimal eigenfunction has no critical value except its minimum and maximum, then the conjecture holds. This is performed thanks to an improvement of Talenti's comparison principle, made possible after a fine study of the geometry of the eigenfunction's nodal domains. 
\end{abstract}

%% Remerciements
%\section*{Remerciements}

%\thispagestyle{empty}
%\clearpage

%% Inhaltsverzeichnis %%%%%%%%%%%%%%%%%%%%%%%%%%%%%%%%%%%%%%%
%\tableofcontents %Table of contents
%\cleardoublepage %The first chapter should start on an odd page.

\pagestyle{plain} %Now display headings: headings / fancy / ...

%% Chapters %%%%%%%%%%%%%%%%%%%%%%%%%%%%%%%%%%%%%%%%%%%%%%%%%
%% ==> Write your text here or include other files.

\section{Introduction}

The purpose of this paper is to tackle the Rayleigh Conjecture for the bilaplacian with Dirichlet boundary conditions (also called Dirichlet bilaplacian). More precisely, we consider the following eigenvalue problem defined in an arbitrary bounded open set $\Omega\subseteq\R^d$, $d\in\N^*$.
\begin{equation}\label{eq:equation aux vp}
\left\{
\begin{array}{rcll}
\Delta^2 u & = & \Gamma u & in\quad \Omega, \\
u & = & 0 & on\quad \partial\Omega,\\
\partial_n u & = & 0 & on\quad\partial\Omega,
\end{array}
\right.
\end{equation}
where $\partial_n=\vec{n}\cdot\nabla$ denotes the partial derivative in the direction of the outward normal unit vector $\vec{n}$. Then, $(u,\Gamma)$ is a solution to (\ref{eq:equation aux vp}) whenever $\Gamma\in\R$ and $u\in H_0^2(\Omega)$ satisfy the first equation in the sense of distributions. General results on polyharmonic operators, and especially on the bilaplacian, can be found in \cite{gazzola-grunau-sweers}. In particular, it is known that the Dirichlet bilaplacian turns out to have a compact self-adjoint positive resolvent when read as an operator on $L^2(\Omega)$. Consequently, there exists an unbounded sequence of positive eigenvalues $\Gamma$ satisfying (\ref{eq:equation aux vp}). Here, we are interested in the lowest one, that we denote $\Gamma(\Omega)$ since it depends on the domain $\Omega$. Due to the min-max formulation of eigenvalues provided by spectral theory, it is known that $\Gamma(\Omega)$ shall be characterised variationally in the following way:

\begin{equation}\label{eq:formulation variationnelle}
\Gamma(\Omega)=\min_{\substack{u\in H_0^2(\Omega)\\ u\neq 0}}\frac{\int_\Omega(\Delta u)^2}{\int_\Omega u^2}.
\end{equation}

The Rayleigh Conjecture concerns the problem of determining the open set with least first eigenvalue among all open sets having same measure. As its counterpart regarding the Dirichlet Laplacian, the conjecture states that the optimal set is given by any ball complying with the volume constraint. Moreover, uniqueness is also claimed in a sense that we shall precise. Indeed, recall that, besides translations, $\Gamma(\Omega)$ is invariant up to the removal of a set of zero $H^2$-capacity (see \cite[section 3.8.1]{henrot-pierre}). In other words, if $|.|$ stands for the Lebesgue measure,

\begin{conjecture}
Let\/ $\Omega$ be a bounded open subset of\/ $\R^d$ and $B$ a ball such that $|B|=|\Omega|$. Then,
\begin{equation}\label{eq:conjecture}
\Gamma(\Omega)\geq\Gamma(B).
\end{equation}
Moreover there is equality if and only if\/ $\Omega$ is a ball, up to a set of zero $H^2$-capacity.
\end{conjecture}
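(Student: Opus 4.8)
The plan is to reduce the problem to a comparison between $\Omega$ and the ball through a symmetrization argument applied to the optimal eigenfunction, following the Nadirashvili--Ashbaugh--Benguria strategy and improving the step where Talenti's principle is applied. First I would fix a minimizer $u\in H_0^2(\Omega)$ realizing $\Gamma(\Omega)$, and set $f=\Delta u\in L^2(\Omega)$, so that $\Delta u=f$ with $u\in H_0^2(\Omega)$ and $\int_\Omega f^2=\Gamma(\Omega)\int_\Omega u^2$. The aim is to build a radial competitor $v\in H_0^2(B)$ on the ball $B$ with $|B|=|\Omega|$ whose Rayleigh quotient is no larger than $\Gamma(\Omega)$; plugging $v$ into \eqref{eq:formulation variationnelle} then yields $\Gamma(B)\le\Gamma(\Omega)$, which is \eqref{eq:conjecture}. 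The classical obstruction is that $f$ changes sign, so one splits $f=f_+-f_-$, solves two Poisson problems, symmetrizes each piece (decreasing rearrangement of $f_+$ placed at the center, increasing rearrangement of $f_-$ near the boundary, on two concentric regions whose measures match those of the positive and negative parts of $u$), and uses Talenti's pointwise comparison on each. The subtlety is a free parameter $a$ governing how the mass is shared, and the key analytic input is the improved Talenti-type comparison stated earlier in the paper, which under the hypothesis that $u$ has no critical values other than $\min u$ and $\max u$ gives a strict gain.

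Concretely, the steps I would carry out are: \emph{(i)} record the two-ball construction and the elementary inequality $\Gamma(B)\le$ (Rayleigh quotient of $v$), reducing everything to pointwise estimates on the symmetrized functions; \emph{(ii)} invoke the improved comparison principle to control $\int_B(\Delta v)^2$ by $\int_\Omega f^2$ and to bound $\int_B v^2$ from below by $\int_\Omega u^2$, the novelty being that the hypothesis on critical values removes the usual loss coming from the level sets where $\nabla u=0$ on a set of positive measure (co-area formula degeneracies); \emph{(iii)} optimize over the splitting parameter $a$, checking that the optimal choice is admissible and that at this choice the two Talenti comparisons combine to give the desired inequality; \emph{(iv)} treat the equality case: if $\Gamma(\Omega)=\Gamma(B)$ then every inequality used must be an equality, in particular the rearrangement inequalities are saturated, which forces the level sets of $u$ to be balls and hence (by the standard characterization of equality in Talenti/P\'olya--Szeg\H{o}) $\Omega$ to be a ball up to a set of zero $H^2$-capacity, $u$ being then the radial optimal eigenfunction.

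The hard part will be step \emph{(iii)} together with making step \emph{(ii)} genuinely usable: one must verify that the hypothesis ``no critical values except the extrema'' is exactly what is needed to upgrade the rearrangement step, and that after optimizing in $a$ no residual term of the wrong sign survives. In the original two- and three-dimensional proofs this is where a dimension-dependent constant enters (the comparison of $\Gamma(B)$ with the eigenvalue of the auxiliary radial problem), and the point of the present paper is that the structural assumption on $u$ lets one bypass the dimensional restriction. I would therefore expect the bulk of the work, deferred here to the cited improved Talenti principle, to be the careful co-area and rearrangement analysis of the nodal domains of $u$; granting that result, the assembly above is essentially bookkeeping, with the equality discussion requiring only the usual rigidity statements for symmetrization.
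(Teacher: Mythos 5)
Your proposal describes the right overall scaffolding — split $\Delta u$ by sign, rearrange onto two concentric regions, apply Talenti-type comparisons, optimize over the mass-splitting parameter — but it proves much less than you claim, and it misidentifies where the improvement actually comes from. Most importantly, the paper does \emph{not} prove the conjecture you are attempting to prove: it proves a conditional theorem requiring (a) the existence of a $C^4$ optimal shape $\Omega$ for the isovolumetric problem (itself flagged as an open issue), (b) the standing hypothesis (U) on the first eigenfunction \emph{of that optimal shape}, and (c) the dimension restriction $4\le d\le 9$. Your outline treats (U) as a minor technicality inside the Talenti step rather than as a hypothesis on an assumed-to-exist, assumed-regular optimizer, and it omits (a) and (c) entirely. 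In particular, you cannot start from an arbitrary bounded open $\Omega$: the geometric input that makes the improved comparison bite — that one nodal domain of $u$ has a hole containing the other — is a consequence of the first-order optimality condition $\Delta u=\pm\alpha$ on $\partial\Omega$, which gives constant sign of $u$ near each boundary component. Without criticality of $\Omega$ there is no such sign information, no hole, and no gain over the classical argument; one simply recovers the Ashbaugh--Benguria--Laugesen bound, which is known to fall short for $d>3$.

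The mechanism you attribute to the improved comparison principle is also off. It is not that (U) ``removes the usual loss coming from level sets where $\nabla u=0$ on a set of positive measure''; the gain comes from applying the isoperimetric inequality separately to $\omega_t\cup\overline{T_t}$ and to $T_t$ when $\omega_t=\{u>t\}$ has a hole $T_t$, which yields $|\partial\omega_t|\ge C_d\bigl[(|\omega_t|+|T_t|)^{(d-1)/d}+|T_t|^{(d-1)/d}\bigr]$ rather than the plain $|\partial\omega_t|\ge C_d|\omega_t|^{(d-1)/d}$. Hypothesis (U) serves only to propagate the hole from $\{u>0\}$ to every upper level set via a Morse-theoretic flow argument, so that this stronger isoperimetric estimate applies at every $t$. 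Finally, your step \emph{(iii)} — ``optimize over the splitting parameter $a$, essentially bookkeeping'' — conceals where the real difficulty lies: this is the asymmetric two-ball problem, a minimization of an explicit Bessel-function quotient over admissible radii $(a,b)$, whose resolution occupies a large part of the paper and whose required zigzag and asymptotic inequalities are only verified numerically for $d=4,\dots,9$. Presenting the assembly as routine also obscures why the result is not unconditional, and your equality-case step \emph{(iv)} concludes ``$\Omega$ is a ball up to $H^2$-capacity,'' whereas the conditional theorem, working under $C^4$ regularity of the optimizer, concludes $\Omega$ is exactly a ball; the capacity-modulo statement for general $\Omega$ is not established by this route.
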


Let us briefly recall the state of the art on the topic and refer to \cite{gazzola-grunau-sweers} for further literature. After its publication in 1894, one of the first results regarding the conjecture goes back to Szegö \cite{szego}, who proved that, as long as the first eigenfunction is of fixed sign, the Faber-Krahn type inequality (\ref{eq:conjecture}) holds. Unfortunately, - and it is one of the main challenges when dealing with higher order elliptic operators - it turns out that the bilaplacian does not enjoy the maximum principle in general for arbitrary domains. In particular, the one-sign property of the first eigenfunction is not guaranteed, and indeed fails for some domains. This behaviour was observed for the first time in annuli with small inner radius in 1952 \cite{duffin-shaffer,coffman-duffin-shaffer}. However, writing the optimality conditions of first and second order, Mohr \cite{mohr} showed in 1975 that no other planar regular shape than the disk could be optimal. Unfortunately, it seems that the approach of Mohr heavily relies on the fact that the ambient space is $\R^2$. Moreover, one year later, Talenti proved a far-reaching comparison principle in \cite{talenti76} from which arose a series of papers making Mohr's result fall into disuse. Indeed, a refinement of its comparison principle led Talenti to exhibit from the variational formulation (\ref{eq:formulation variationnelle}) an auxiliary problem, today called the \enquote{two-ball problem}. This allowed him to give a lower bound on the optimal eigenvalue depending on the dimension (see \cite{talenti81}). Then, in 1995, Nadirashvili \cite{nadirashvili} solved the two-ball problem and hence the Rayleigh Conjecture in $\R^2$. Subsequently, still in the wake of Talenti's approach, Ashbaugh and Benguria proved the conjecture in $\R^2$ and $\R^3$ in 1995 (see \cite{ashbaugh-benguria}). Moreover, the two-ball problem was finally solved in any dimension in 1996 by Ashbaugh and Laugesen in \cite{ashbaugh-laugesen}. As a consequence, they provided a very precise lower bound on the optimal eigenvalue. As a payback, however, they showed that the plain approach of Talenti could not answer the Rayleigh Conjecture when $d>3$. Since then, very few progress has been performed. Let us quote however the recent works \cite{kristaly20,kristaly22} proving an analogue of the Rayleigh Conjecture in negatively and positively curved Riemanian manifolds. Finally, we mention our previous work \cite{leylekian} in which we provide sufficient conditions for the conjecture to hold.

The present document is intended to contribute for the resolution of the Rayleigh Conjecture. More precisely, we will prove that the conjecture holds under some mere assumptions that we shall describe below. The first of our assumptions is that there exists a $C^4$ regular shape solving
\begin{equation}\label{eq:pb}
\min\{\Gamma(\Omega):\Omega\subseteq \R^d\text{ bounded open set, }|\Omega|=c\}
\end{equation}
where $c$ is a given positive real number. We precise that the existence of an optimal shape for (\ref{eq:pb}) is still an open issue. See however \cite[Theorem 3.5]{bucur} and more recently \cite{stollenwerk} dealing with the question for shapes lying within some prescribed bounded region. See also \cite[section 2.4]{bucur-buttazzo}, in which shapes are sought in a particular class of convex domains. In the rest of the document, we will denote $\Omega$ a $C^4$ regular solution to (\ref{eq:pb}). The assumption concerning the regularity of $\Omega$ is standard when one invokes shape derivation as it will be done. Indeed, the fact that $\Omega$ is $C^4$ guarantees that the eigenfunctions are $H^4(\Omega)$ (see \cite[Theorem 2.20]{gazzola-grunau-sweers}). Furthermore, the $L^p$ regularity theory shows that the eigenfunctions are even $W^{4,p}(\Omega)$ for all $1<p<\infty$. This, combined with Sobolev embeddings, gives that the eigenfunctions are $C^{3,\alpha}(\overline{\Omega})$, for all $0<\alpha<1$. This observation will be useful at some point to study the behaviour of the first eigenfunction near the boundary of $\Omega$ (see Proposition \ref{prop:positivité au voisinage du bord}).

Apart from the regularity assumption on $\Omega$, we will also need a non standard technical assumption. This condition regards the critical points of a first eigenfunction on $\Omega$ (we will denote it $u$ in the rest of the document), and is intended to ensure that the topology of the upper level sets of $u$ does not change (see also Remark \ref{remarque: U}). It can be expressed as follows.
\begin{equation}\label{hyp:u}
\tag{U}
\forall x\in\Omega,\qquad\nabla u(x)=0\quad\Longrightarrow\quad u(x)\textit{ is the global minimum or global maximum of u}.
\end{equation}
Let us emphasize that, although being critical for $u$, the points of $\partial\Omega$ are not taken into consideration in (\ref{hyp:u}). In other words, (\ref{hyp:u}) is equivalent to ask that $u$ admits only $\min\limits_{\overline{\Omega}} u$ and $\max\limits_{\overline{\Omega}} u$ as possible critical values within $\Omega$. Note however that it allows $u$ to have many critical points. This occurs for instance in annuli with large inner radius, for which the first eigenfunction is radially symmetric and admits only one critical value in $\Omega$, which is its maximum (or minimum) value (see \cite{coffman-duffin-shaffer}). In this situation, we see that the corresponding critical points form a sphere due to the radial symmetry of $u$. Using assumption (\ref{hyp:u}), it becomes possible to express the main conclusion of the present document, which is the theorem stated below.

\newcommand{\reg}{$C^4$}
\begin{theoreme}\label{thm:faber-krahn U}
Let\/ $\Omega$ be a \reg\/ optimal shape for problem (\ref{eq:pb}) and $B$ a ball such that $|\Omega|=|B|$. Assume that an eigenfunction associated with $\Gamma(\Omega)$ satisfies (\ref{hyp:u}) and that $4\leq d \leq 9$. Then, \begin{equation*}
\Gamma(\Omega)\geq\Gamma(B).
\end{equation*}
Moreover there is equality if and only if\/ $\Omega$ is a ball.
\end{theoreme}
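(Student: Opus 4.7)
The plan is to refine Talenti's symmetrization approach by leveraging both the optimality of $\Omega$ (via shape calculus) and the nodal simplicity enforced by (\ref{hyp:u}). First, since $\Omega$ is a $C^4$ optimum for (\ref{eq:pb}) under the volume constraint, the Hadamard formula for the first-order shape derivative of $\Gamma$ yields the overdetermined condition $(\Delta u)^2 = c$ on $\partial\Omega$, for some constant $c>0$. In particular $\Delta u$ does not vanish on $\partial\Omega$, and combined with $u\in C^{3,\alpha}(\overline\Omega)$ this forces $u$ to have a definite sign in a neighborhood of $\partial\Omega$ inside $\Omega$ (this is Proposition \ref{prop:positivité au voisinage du bord} cited in the introduction). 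Up to replacing $u$ by $-u$, I would assume $u\geq 0$ near $\partial\Omega$.

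Next, I would use (\ref{hyp:u}) to control the nodal topology of $u$. By hypothesis, $0$ is not an interior critical value of $u$, so the nodal set $\{u=0\}\cap\Omega$ is either empty or a disjoint union of smooth closed hypersurfaces. Morse-theoretic arguments then show that each connected component of $\{u>0\}$ (resp.\ of $\{u<0\}$) contains a unique interior critical point, at which $u$ attains its global maximum (resp.\ minimum), and the level sets $\{u=t\}$ inside such a component are topological spheres nested around that point.

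Building on this structure, I would refine Talenti's comparison principle. Classically, one writes $-\Delta u=w$, applies Schwarz rearrangement to $(|u|,|w|)$ component-by-component, and is led to Talenti's \enquote{two-ball} auxiliary problem, sharply solved by Ashbaugh and Laugesen but too weak to resolve the conjecture for $d>3$. Under (\ref{hyp:u}) the relevant level sets are connected, smooth hypersurfaces enclosing a single critical point, which allows the isoperimetric inequality to be invoked in a stronger form and upgrades Talenti's inequality. The resulting modified two-ball problem has a restricted admissible set, and I expect it to exclude the degenerate Ashbaugh--Laugesen minimizer in the range $4\leq d\leq 9$.

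Finally, plugging the refined inequality into the variational formulation (\ref{eq:formulation variationnelle}) and solving the modified two-ball problem gives the desired bound $\Gamma(\Omega)\geq\Gamma(B)$; equality is traced back through the Talenti and isoperimetric inequalities and forces $\Omega$ to be a ball. The main obstacle is the third step --- quantifying the improvement in Talenti's inequality coming from the nodal simplicity --- together with the subsequent two-ball computation, whose quantitative tightness is expected to be responsible for the dimensional ceiling $d\leq 9$.
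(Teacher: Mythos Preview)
Your outline captures the overall architecture correctly --- optimality condition, sign near the boundary, a refinement of Talenti, and a modified two-ball problem --- but the geometric mechanism you propose in step three is wrong, and this is precisely where the substance lies.

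You claim that under (\ref{hyp:u}) each nodal component contains a \emph{unique} critical point and that the level sets are topological spheres nested around it. Neither follows from (\ref{hyp:u}): that hypothesis restricts critical \emph{values}, not critical \emph{points}, and the paper explicitly notes (the annulus example) that the maximum may be attained along an entire sphere. More importantly, even if the level sets of $u_+$ were topological spheres, this would buy you nothing in the isoperimetric step: a connected, simply connected set obeys the \emph{same} isoperimetric inequality as a general set, so Talenti's estimate would be unchanged and you would land back on the symmetric two-ball problem of Ashbaugh--Laugesen, which is known to fail for $d>3$.

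The actual improvement comes from the opposite geometric picture. From the sign condition near $\partial\Omega$ one shows (Corollary \ref{corollaire:trou}) that one nodal domain, say $\Omega_+$, has a \emph{hole} $T$ containing $\Omega_-$. Assumption (\ref{hyp:u}) is then used, via a gradient-flow/Morse argument (Lemma \ref{lemme:trous}), to guarantee that this hole persists in every superlevel set $\{u>t\}$. The point is that a set with a hole has \emph{extra} perimeter: applying the isoperimetric inequality to $\omega_t\cup\overline{T_t}$ and to $T_t$ separately gives
\[
|\partial\omega_t|\ \ge\ C_d\bigl[(|\omega_t|+|T|)^{\frac{d-1}{d}}+|T|^{\frac{d-1}{d}}\bigr],
\]
which beats the naive bound $C_d|\omega_t|^{(d-1)/d}$ by a fixed factor. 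Feeding this into Talenti's proof yields $u_+^*\le \kappa^2 v$ with an explicit $\kappa<1$ depending on $|\Omega_+|,|\Omega_-|$ (Theorem \ref{thm:talenti modifie}), and the resulting \emph{asymmetric} two-ball problem carries a weight $K(a,b)^4$ on one of the denominators (Proposition \ref{prop:two-balls problem}). It is the resolution of this weighted problem (Theorem \ref{thm:pb aux 2 boules}), verified numerically for $4\le d\le 9$, that produces the dimensional restriction. Your sketch should be reoriented around the hole structure rather than around spherical level sets.
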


\begin{remarque}
The assumption on the dimension $d$ comes from the fact that Theorem \ref{thm:faber-krahn U} relies on Theorem \ref{thm:pb aux 2 boules}, the assumptions of which are technical inequalities that one may verify numerically. They are not restrictive, and have been checked for dimensions $4\leq d \leq 9$ (see section \ref{sec:numerique} in appendix). In other words, in Theorem \ref{thm:faber-krahn U}, the assumption $4\leq d \leq 9$ shall be replaced with the assumptions of Theorem \ref{thm:pb aux 2 boules}.
\end{remarque}

The proof of Theorem \ref{thm:faber-krahn U} is based on a refinement of the classical proof given by Ashbaugh and Benguria \cite{ashbaugh-benguria}, following the pioneer approach of Talenti \cite{talenti81}. Let us briefly explain the structure of our argumentation. The shape derivation framework allows to derive optimality conditions for the optimal shape. These conditions read in terms of the Laplacian of the eigenfunction on the boundary. In section \ref{sec:simplicity}, we exploit this particular information to conclude that the eigenfunction does not change sign in a neighbourhood of the boundary. One of our contributions is to translate this observation in terms of the geometry of the nodal domains of the eigenfunction. This is performed in section \ref{sec:geometrie à trou}, where we show that one of the two nodal domains has a \enquote{hole} containing the other one. The purpose is then to inject the special geometry of the nodal domains in Talenti's comparison principle in order to derive a refined version of this principle. This is obtained in section \ref{sec:talenti}. Our improved comparison principle is used instead of the standard one in the classical proof and yields an asymmetric two-ball problem as shown in section \ref{sec:probleme aux deux boules}. The resolution of the new two-ball problem is carried out in section \ref{sec:resolution deux boules}. The proof of Theorem \ref{thm:faber-krahn U} is finally achieved in section \ref{sec:preuve theoreme}, in which we also discuss our main assumption (\ref{hyp:u}).

\begin{remarque}
Let us point out that the only part of the proof of Theorem \ref{thm:faber-krahn U} which crucially relies on assumption (\ref{hyp:u}) is the refined Talenti type comparison principle. Indeed, assumption (\ref{hyp:u}) is made on the eigenfunction $u$ in order to ensure that the special geometry of the nodal domains of $u$ is transmitted to the upper level sets of $u$ (see Remark \ref{remarque: U}), allowing to obtain a quantitative version of Talenti's comparison principle (see Theorem \ref{thm:talenti modifie}). However, it is clear that, even without assumption (\ref{hyp:u}), the standard version of Talenti's comparison principle is not optimal when applied to an open set with a hole. Therefore, it would be of particular interest to study to which extent assumption (\ref{hyp:u}) can be removed in Theorem \ref{thm:talenti modifie}. This could in turn allow to remove assumption (\ref{hyp:u}) in Theorem \ref{thm:faber-krahn U}.
\end{remarque}

\section{Simplicity of the eigenvalue and sign of an eigenfunction near the boundary}\label{sec:simplicity}

In this section we recall the optimality condition satisfied by an optimal shape for problem (\ref{eq:pb}). From this, we deduce an immediate but not less crucial consequence reading in terms of the sign of the first eigenfunction near any connected component of the boundary. Let us begin with the following result (see \cite[Theorem 3]{leylekian}) which comes from the theory of shape derivation.

\begin{theoreme}\label{thm:condition d'optimalité}
Let\/ $\Omega$ be a \reg\/ open set solving (\ref{eq:pb}). Then, $\Gamma(\Omega)$ is simple. Moreover, if $u$ denotes an $L^2$-normalised eigenfunction associated with $\Gamma(\Omega)$, $\Delta u$ is a.e. constant equal to $\pm\alpha$ on any connected component of $\partial\Omega$, where
\begin{equation}\label{eq:alpha}
\alpha:=\sqrt{\frac{4\Gamma(\Omega)}{d|\Omega|}}.
\end{equation}
\end{theoreme}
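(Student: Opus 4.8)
The plan is to invoke the shape derivative of the functional $\Omega\mapsto\Gamma(\Omega)$ at the optimal shape $\Omega$ and to exploit the fact that this derivative must vanish under the volume constraint. First I would record that, since $\Omega$ is $C^4$ and the eigenfunction $u$ lies in $H^4(\Omega)\cap C^{3,\alpha}(\overline\Omega)$, the map $t\mapsto\Gamma(\Omega_t)$, where $\Omega_t=(\mathrm{Id}+tV)(\Omega)$ for a smooth vector field $V$, is differentiable, with a Hadamard-type formula for the derivative. The subtle point here is the simplicity of $\Gamma(\Omega)$: shape differentiability of an eigenvalue is guaranteed only when the eigenvalue is simple, so one must first argue that any optimal shape carries a simple first eigenvalue. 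For this I would rely on the cited companion result (\cite[Theorem 3]{leylekian}) — the statement of the theorem permits this, and the argument is the standard one: if $\Gamma(\Omega)$ were multiple, one could construct a volume-preserving deformation decreasing it, contradicting optimality.

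Once simplicity is in hand, I would write down the shape derivative. For the clamped plate with an $L^2$-normalised eigenfunction $u$, differentiating $\Gamma(\Omega)=\int_\Omega(\Delta u)^2$ under the constraints $u\in H_0^2(\Omega)$ (so $u=\partial_n u=0$ on $\partial\Omega$) and $\int_\Omega u^2=1$ yields a boundary integral of the form
\begin{equation*}
\Gamma'(\Omega)(V)=-\int_{\partial\Omega}(\Delta u)^2\,(V\cdot\vec n)\,d\sigma,
\end{equation*}
the only surviving term because $u$ and its normal derivative vanish on $\partial\Omega$ (the traces of lower-order terms in the Hadamard formula drop out). The volume constraint contributes a Lagrange multiplier: there is $\lambda\in\R$ with $\Gamma'(\Omega)(V)=\lambda\int_{\partial\Omega}(V\cdot\vec n)\,d\sigma$ for all admissible $V$. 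Since $V\cdot\vec n$ ranges over a dense set of functions on $\partial\Omega$ as $V$ varies, this forces $(\Delta u)^2=-\lambda$ to be a.e. constant on $\partial\Omega$; hence $|\Delta u|$ is a.e. equal to some constant on each connected component of $\partial\Omega$, and by the continuity of $\Delta u$ up to the boundary (here $u\in C^{3,\alpha}(\overline\Omega)$ is used, and $\Delta u$ continuous suffices) it is genuinely constant, equal to $\pm\alpha_i$ on the $i$-th component, for some $\alpha_i\geq 0$.

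It remains to identify the constant as the specific value $\alpha=\sqrt{4\Gamma(\Omega)/(d|\Omega|)}$ and to see that it is the \emph{same} on every component. For this I would use a Rellich–Pohozaev / Pohozaev-type identity: testing the equation $\Delta^2 u=\Gamma u$ against the vector field $x\cdot\nabla u$ and integrating by parts over $\Omega$, all interior terms combine to a multiple of $\Gamma\int_\Omega u^2=\Gamma$, while the boundary terms reduce — again because $u=\partial_n u=0$ on $\partial\Omega$ — to a single term $\tfrac14\int_{\partial\Omega}(\Delta u)^2\,(x\cdot\vec n)\,d\sigma$. Plugging in $(\Delta u)^2\equiv\alpha_i^2$ on the $i$-th component and using $\int_{\partial\Omega}x\cdot\vec n\,d\sigma=d|\Omega|$ (divergence theorem) then pins the common value down to $\alpha_i^2=4\Gamma(\Omega)/(d|\Omega|)$; the homogeneity forces all the $\alpha_i$ to coincide with this $\alpha$. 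I expect the main obstacle to be bookkeeping the boundary terms in the two integration-by-parts computations (the shape derivative and the Pohozaev identity) carefully enough to be sure that all contributions other than $(\Delta u)^2(V\cdot\vec n)$, respectively $(\Delta u)^2(x\cdot\vec n)$, actually vanish; this is where the clamped boundary conditions $u=\partial_n u=0$ and the extra regularity $u\in C^{3,\alpha}(\overline\Omega)$ are doing all the work, and one must check there is enough smoothness to justify every step. Since the statement is quoted from \cite{leylekian}, I would ultimately cite it rather than reproduce the full computation.
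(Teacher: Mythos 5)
Your proposal is correct and follows the same route the paper takes: both simply invoke \cite[Theorem~3]{leylekian}, which proves the statement via the shape derivative of $\Gamma$, the Hadamard formula reducing to the single boundary term $-\int_{\partial\Omega}(\Delta u)^2(V\cdot\vec n)$, and a Lagrange multiplier for the volume constraint. One small remark: the Lagrange multiplier argument alone already shows that $(\Delta u)^2$ equals the \emph{same} constant $-\lambda$ a.e.\ on the whole of $\partial\Omega$ (not just component-wise), so the Pohozaev--Rellich identity is needed only to identify that single constant as $4\Gamma(\Omega)/(d|\Omega|)$ — in fact the quickest way to do this is to plug the dilation field $V=x$ into the shape derivative and use the scaling $\Gamma(t\Omega)=t^{-4}\Gamma(\Omega)$, which gives $\alpha^2\,d|\Omega|=4\Gamma(\Omega)$ immediately; your phrase ``the homogeneity forces all the $\alpha_i$ to coincide'' conflates the two steps slightly, but the argument is sound.
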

Note that the optimality condition given in Theorem \ref{thm:condition d'optimalité} is actually fulfilled by any \reg\/ regular shape $\Omega$ being a critical shape of $\Omega\mapsto |\Omega|^{\frac{4}{d}}\Gamma(\Omega)$ in the sense of shape derivatives. This motivates the following definition.

\begin{definition}\label{def:criticalité}
A bounded open set\/ $\Omega$ is a critical shape (for the first eigenvalue) if any $L^2$-normalised first eigenfunction $u$ on $\Omega$ is such that $\Delta u$ is a.e. constant equal to $\pm\alpha$ on each connected component of $\partial\Omega$, where $\alpha$ is given in (\ref{eq:alpha}).
\end{definition}
We remark that it is easy to show that any ball is a critical shape (see \cite{buoso-lamberti13} for general results on this topic). Now, let us state an easy but important lemma regarding the sign of the eigenfunction near boundary components of a critical shape. We mention that this result seems quite known in the litterature (see \cite[equation (20)]{mohr} and also \cite[equation (4.26)]{eichmann-schatzle}).

\begin{lemme}\label{lemme:positivité au voisinage du bord}
Let\/ $\Omega$ be a $C^1$ critical shape and $\gamma$ a connected component of $\partial\Omega$. Assume that a first eigenfunction $u$ is $C^2(\overline{\Omega})$. Then there exists a neighbourhood of $\gamma$ in which the sign of $u$ is the same as the sign of $\Delta u$ on $\gamma$.
\end{lemme}

\begin{proof}
Considering $-u$ we can assume without loss of generality that $\Delta u=\alpha>0$ on $\gamma$. Thanks to the ancillary information that $u=|\nabla u|=0$ on the boundary, we get that $u>0$ in a neighbourhood of $\gamma$. This comes from the fact that for each point $p\in\gamma$, the regularity of $u$ allows writing the following Taylor expansion at $p$: for all $x$ close enough to $p$ such that $(x-p)\parallel\vec{n}(p)$,
\begin{equation}\label{eq:taylor}
u(x)\geq u(p)+\nabla u(p)\cdot(x-p)+\frac{1}{2}D^2 u(p)\cdot (x-p)^2-\frac{\alpha}{4}|x-p|^2=\frac{\alpha}{4}|x-p|^2>0.
\end{equation}
For the moment, the closeness of $x$ and $p$ depends on $p$. But as $\partial\Omega$ is compact, the associated threshold might be chosen uniformly, meaning that there exists $\epsilon>0$ such that for all $p\in\partial\Omega$ and $x\in\Omega$, if $x\in B_\epsilon(p)$ and $(x-p)\parallel\vec{n}(p)$, then (\ref{eq:taylor}) holds. Then, $\{p-t\vec{n}(p): p\in\gamma, t\in]0,\epsilon[\}$ is a neighbourhood of $\gamma$ in which $u$ is positive.
\end{proof}

The main result of this section is a consequence of this lemma and of the optimality condition satisfied by $\Omega$.

\begin{proposition}\label{prop:positivité au voisinage du bord}
Let\/ $\Omega$ be a \reg\/ optimal shape for problem (\ref{eq:pb}). Then any first eigenfunction is of constant sign in a neighbourhood of any connected component of $\partial\Omega$.
\end{proposition}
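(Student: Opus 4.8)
The plan is to combine the optimality condition of Theorem~\ref{thm:condition d'optimalité} with the elementary Lemma~\ref{lemme:positivité au voisinage du bord}. First I would observe that a \reg{} optimal shape for~(\ref{eq:pb}) is in particular a critical shape in the sense of Definition~\ref{def:criticalité}: indeed, Theorem~\ref{thm:condition d'optimalité} asserts precisely that, for an $L^2$-normalised first eigenfunction $u$, the function $\Delta u$ equals $\pm\alpha$ a.e.\ on each connected component of $\partial\Omega$, with $\alpha$ given by~(\ref{eq:alpha}). Since $\Omega$ is \reg{}, it is a fortiori $C^1$, so the ambient hypotheses of Lemma~\ref{lemme:positivité au voisinage du bord} are met once we verify the regularity of the eigenfunction.

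Second, I would invoke the elliptic regularity recalled in the introduction: because $\Omega$ is \reg{}, the eigenfunctions lie in $W^{4,p}(\Omega)$ for every $1<p<\infty$, hence in $C^{3}(\overline{\Omega})\subseteq C^2(\overline{\Omega})$ by Sobolev embedding. In particular $\Delta u$ extends continuously to $\overline{\Omega}$, so the ``a.e.'' constancy in Theorem~\ref{thm:condition d'optimalité} is promoted to a genuine identity: on each connected component $\gamma$ of $\partial\Omega$ one has $\Delta u\equiv+\alpha$ everywhere, or $\Delta u\equiv-\alpha$ everywhere, a single constant which is nonzero since $\alpha>0$ by~(\ref{eq:alpha}).

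Third, fixing a connected component $\gamma$, the two previous points show that $\Delta u$ has a well-defined nonzero constant sign on $\gamma$; Lemma~\ref{lemme:positivité au voisinage du bord} then produces a neighbourhood of $\gamma$ on which $u$ has that same constant sign. Finally, since $\Gamma(\Omega)$ is simple (again by Theorem~\ref{thm:condition d'optimalité}), every first eigenfunction is a nonzero scalar multiple of $u$, hence also of constant sign in a neighbourhood of $\gamma$; as $\gamma$ was arbitrary, this gives the claim. I do not expect a real obstacle here: the proposition is essentially a bookkeeping assembly of the optimality condition, the $C^4$-regularity of the introduction, and Lemma~\ref{lemme:positivité au voisinage du bord}, the only point requiring a word being the upgrade of the a.e.\ constancy of $\Delta u$ on $\gamma$ to pointwise constancy, so that Lemma~\ref{lemme:positivité au voisinage du bord} applies verbatim.
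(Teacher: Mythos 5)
Your proposal is correct and follows essentially the same route as the paper: Theorem~\ref{thm:condition d'optimalité} gives criticality, $L^p$ elliptic regularity plus Sobolev embedding gives $u\in C^{3,\beta}(\overline{\Omega})\subseteq C^2(\overline{\Omega})$, and Lemma~\ref{lemme:positivité au voisinage du bord} finishes. The paper simply spells out the $L^p$-bootstrap (the induction on $p_n$) rather than citing it as already recalled in the introduction, and it applies the regularity argument directly to \emph{any} first eigenfunction, which renders your final simplicity remark unnecessary (though harmless); your observation that continuity of $\Delta u$ upgrades the a.e.\ constancy to pointwise constancy on $\gamma$ is a legitimate small clarification that the paper leaves implicit.
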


\begin{proof}
Theorem \ref{thm:condition d'optimalité} shows that $\Omega$ is a critical shape. Let us justify that any first eigenfunction $u$ is $C^2(\overline{\Omega})$.  This comes from elliptic regularity and from a standard induction argument.

Indeed, we know that $u\in L^2(\Omega)$. Let us assume that $u\in L^p(\Omega)$ for some $p\geq2$. Because $\Omega$ is $C^4$, elliptic regularity (\cite[Theorem 2.20]{gazzola-grunau-sweers}) yields $u\in W^{4,p}(\Omega)$. Then, by Sobolev embeddings (\cite[Theorem 2.20]{gazzola-grunau-sweers}) $u\in L^{q}(\Omega)$ for all $1<q<p^*$, where $p^*=+\infty$ if $d/p\leq 4$, and $p^*=dp/(d-4p)$ otherwise. This suggests to define the sequence $(p_n)_n$ by $p_0:=2$, and
$$
p_{n+1}:=p_n^*=
\begin{cases}
+\infty & \text{if }4p_n\geq d, \\
dp_n/(d-4p_n) & \text{if }4p_n< d.
\end{cases}
$$
By induction, the previous discussion shows that for all $n\in\N$ and all $1<q<p_n$, $u\in L^q(\Omega)$. But the sequence $(p_n)_n$ is nondecreasing, hence converges in $[2,+\infty]$. If the limit $p$ were finite, it would satisfy $p=dp/(d-4p)$, which admits no solution in $[2,+\infty[$, hence $p=+\infty$.

In particular, $u\in L^p(\Omega)$ for all $1<p<\infty$, thus $u\in W^{4,p}(\Omega)$. Since, for all $0<\beta<1$, there exists $1<p<\infty$ such that $4-\frac{d}{p}\geq 3+\beta$, we obtain that $u\in C^{3,\beta}(\overline{\Omega})$, which shows as claimed that $u\in C^2(\overline{\Omega})$. Eventually, we apply Lemma \ref{lemme:positivité au voisinage du bord} and get the result.
\end{proof}

\section{Geometry of $\Omega_+$}\label{sec:geometrie à trou}

The proof of Theorem \ref{thm:faber-krahn U} is based on a refinement of Talenti's comparison principle which is used in the classical proof given in \cite{ashbaugh-benguria}. This refinement needs a crucial new information regarding the geometry of the nodal domains \mbox{$\Omega_+:=\{u>0\}$} and \mbox{$\Omega_-:=\{u<0\}$}. For that, one shall use advantageously Proposition \ref{prop:positivité au voisinage du bord}, which precisely tells that $\Omega_+$ and $\Omega_-$ cannot meet simultaneously a given connected component of the boundary. Therefore, as we will see in a few moment, if $\Omega$ is connected, and if $\Omega_+,\Omega_-\neq\emptyset$, at least one of $\Omega_+,\Omega_-$ will have a hole. In order to make this observation clearer, we need a precise definition of a hole.

\begin{definition}\label{def:trou}
Let $\omega$ and $T$ be nonempty bounded open sets. We say that $T$ is a hole for $\omega$ if:
\begin{enumerate}
\item $\omega$ and $T$ are disjoint,
\item $\partial T\subseteq \partial \omega$.
\end{enumerate}
\end{definition}

Let us remark that a hole need neither to be connected nor simply connected (see Figure \ref{fig:trous/trou non connexe}). On the other hand, unlike Figure \ref{fig:trous/trou connexe}, an open set admitting a hole shall be simply connected. This is the case with annuli in dimension $d>2$, which admit balls as holes and remain simply connected.

\begin{figure}[h!]
\centering
     \begin{subfigure}[b]{0.3\textwidth}
         \centering
         \includegraphics[width=\textwidth]{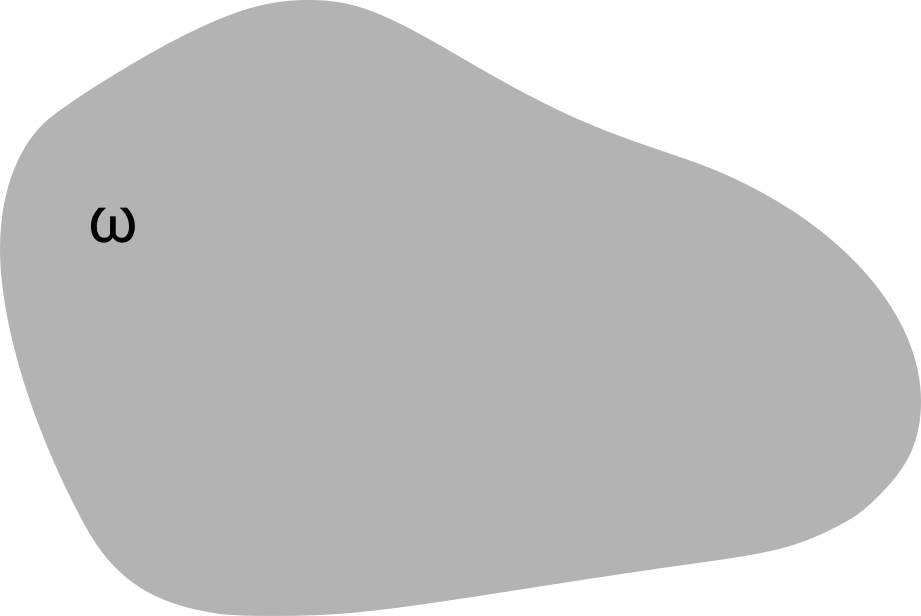}
         \caption{No hole.}
         \label{fig:trous/pas de trou}
     \end{subfigure}
     \hfill
     \begin{subfigure}[b]{0.3\textwidth}
         \centering
         \includegraphics[width=\textwidth]{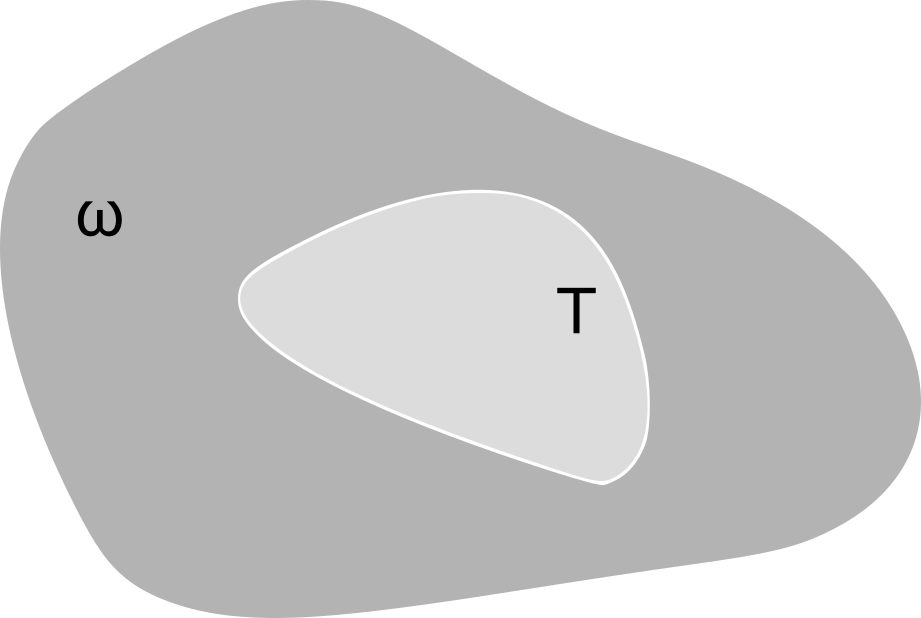}
         \caption{Connected hole.}
         \label{fig:trous/trou connexe}
     \end{subfigure}
     \hfill
     \begin{subfigure}[b]{0.3\textwidth}
         \centering
         \includegraphics[width=\textwidth]{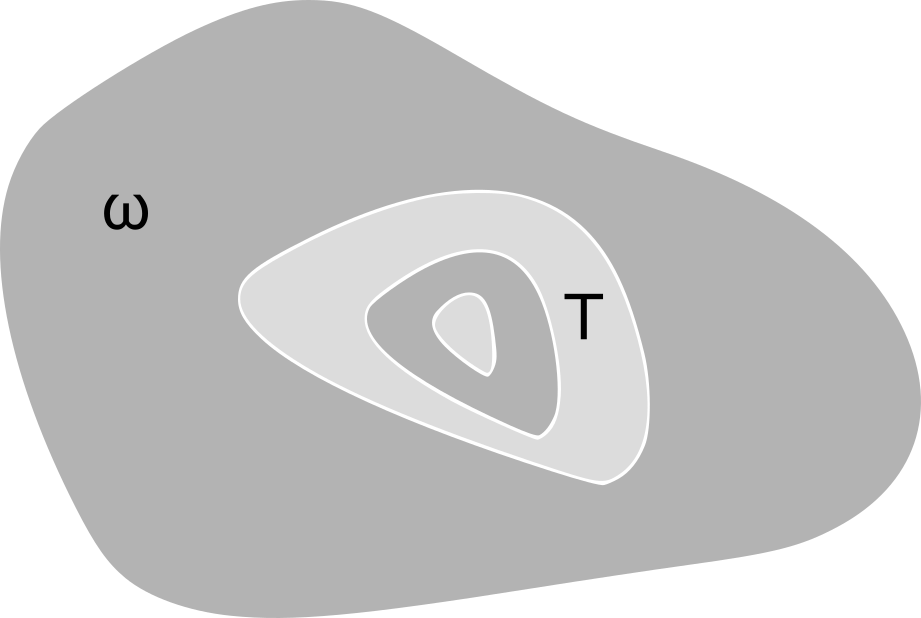}
         \caption{Disconnected hole.}
         \label{fig:trous/trou non connexe}
     \end{subfigure}
        \caption{Holes in different situations. In each case, the open set $\omega$ is in dark grey, whereas its hole $T$, if it exists, is in light gray.}
        \label{fig:trous1}
\end{figure}

Thanks to the next lemma, we see that it is always possible to assume that $\Omega_+$ has such a hole.

\begin{restatable}{relemme}{LemmeTrou}\label{lemme:trou}
Let $\omega_+,\omega_-$ be nonnempty disjoint open sets contained in a bounded connected open set $\omega$, such that, $\overline{\omega_+}\cup\overline{\omega_-}=\overline{\omega}$ and $\partial\omega\cap\partial\omega_+\cap\partial\omega_-=\emptyset$. We also assume that $\omega_\pm$ is $C^1$ relatively with respect to $\omega$, meaning that $\partial\omega_\pm\cap\omega$ is a $C^1$ hypersurface of $\R^d$. Then, either $\omega_+$ has a hole containing $\omega_-$ or $\omega_-$ has a hole containing $\omega_+$.
\end{restatable}

The technical proof is given in appendix, section \ref{annexe:trou}. Let us emphasize that the conclusion of Lemma \ref{lemme:trou} is that the hole of $\omega_+$ (resp. $\omega_-$) contains $\omega_-$ (resp. $\omega_+$). Indeed, in general, as it is shown in Figure \ref{fig:trous/omega_+ troué}, the hole is strictly greater than $\omega_-$ (resp. $\omega_+$).
Let us also briefly discuss the assumption of $C^1$ regularity of $\partial\omega_\pm\cap\omega$. We believe that this assumption shall be weakened, since it is only needed for applying Jordan-Brouwer Theorem and also for approaching $\partial\omega_\pm\cap\omega$ strictly from one side only (see Lemma \ref{lemme:connexe privé d'un ensemble ne recontrant pas sa frontiere} for details). We mention that this last point might be achieved when $\partial\omega_\pm\cap\omega$ is of finite perimeter \cite{schmidt}. However, it is not clear how weak the regularity of $\omega_\pm$ can be assumed to run the whole proof. Anyway, this is not an issue as, in the next lines, $\partial\omega_\pm\cap\omega$ will turn to be the nodal line of the first eigenfunction, and hence will enjoy some regularity properties. This is detailed in Corollary \ref{corollaire:trou}, which explains how Lemma \ref{lemme:trou} is applied to the nodal domains.

\begin{figure}[h!]
\centering
     \begin{subfigure}[b]{0.3\textwidth}
         \centering
         \includegraphics[width=\textwidth]{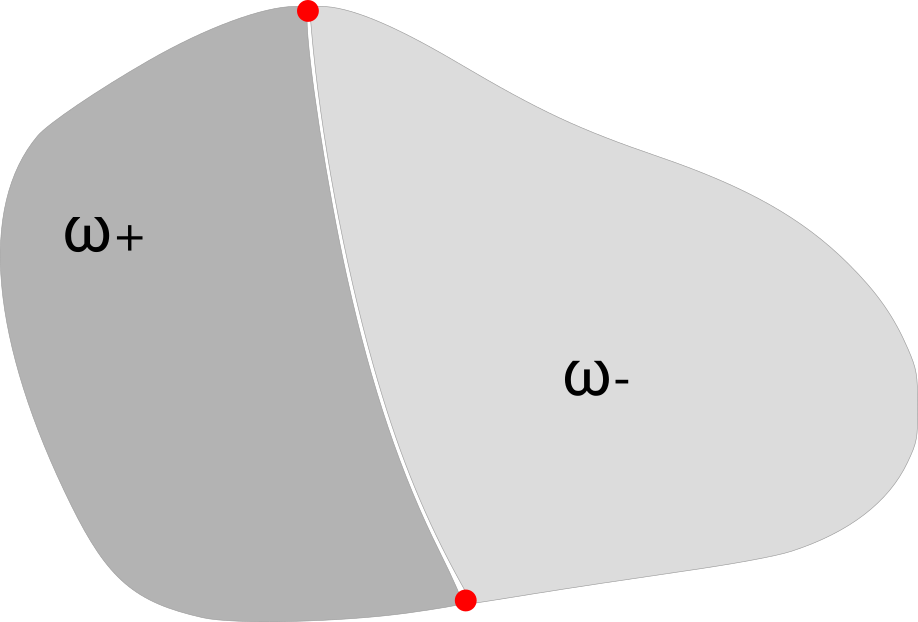}
         \caption{$\partial\omega\cap\partial\omega_+\cap\partial\omega_-\neq\emptyset$.}
         \label{fig:trous/intersection des bords}
     \end{subfigure}
     \hfill
     \begin{subfigure}[b]{0.3\textwidth}
         \centering
         \includegraphics[width=\textwidth]{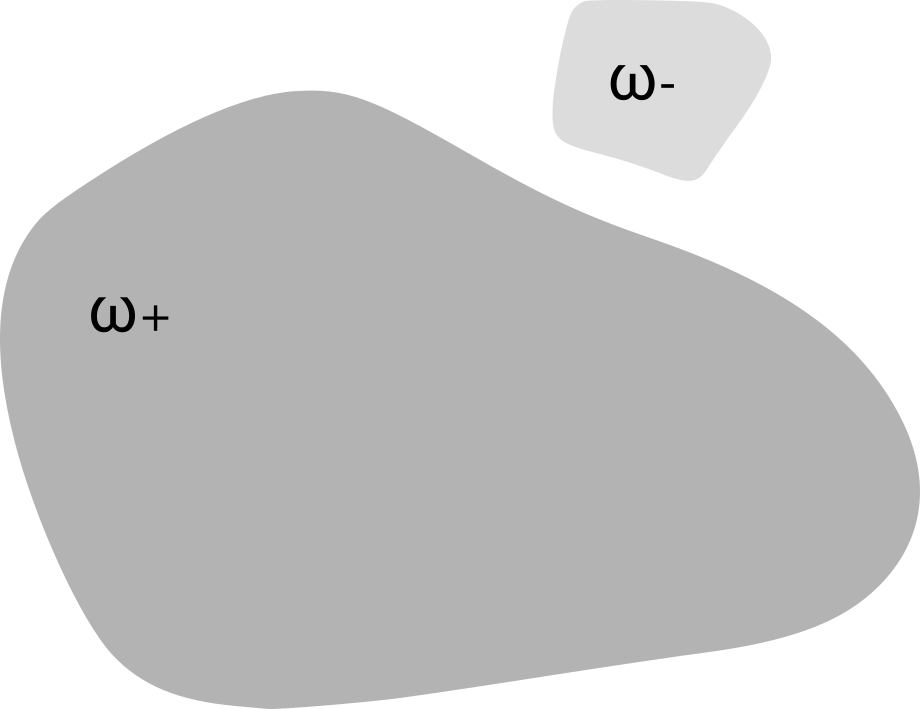}
         \caption{$\omega$ disconnected.}
         \label{fig:trous/omega non connexe}
     \end{subfigure}
     \hfill
     \begin{subfigure}[b]{0.3\textwidth}
         \centering
         \includegraphics[width=\textwidth]{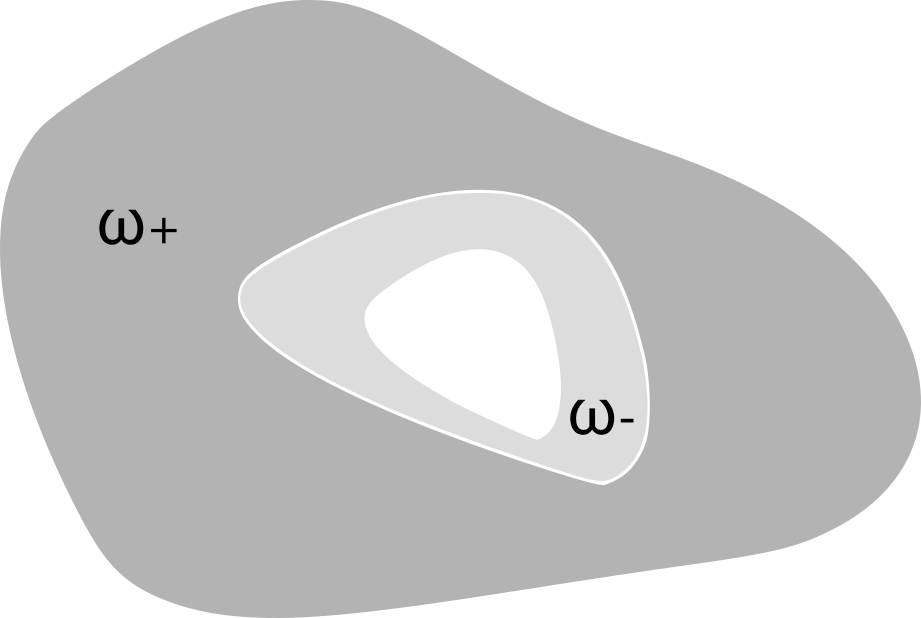}
         \caption{$\omega_+$ with a hole.}
         \label{fig:trous/omega_+ troué}
     \end{subfigure}
        \caption{Different configurations of the open sets $\omega_+$ and $\omega_-$. In each case, $\omega_+$ is in dark grey, and $\omega_-$ is in light gray. In situation \ref{fig:trous/intersection des bords}, the assumption $\partial\omega\cap\partial\omega_+\cap\partial\omega_-=\emptyset$ of Lemma \ref{lemme:trou} fails since $\partial\omega_+$ and $\partial\omega_-$ meet simultaneously $\partial\omega$ at the red points. In situation \ref{fig:trous/omega non connexe}, the assumption of connectedness of $\omega$ fails. In situation \ref{fig:trous/omega_+ troué}, all the assumptions of Lemma \ref{lemme:trou} hold, and hence $\omega_+$ admits a hole containing $\omega_-$.}
        \label{fig:trous2}
\end{figure}

\begin{corollaire}\label{corollaire:trou}
Let\/ $\Omega$ be a \reg\/ optimal shape for (\ref{eq:pb}), and $u$ a first eigenfunction on $\Omega$. Assume that $\Omega_+:=\{u>0\}$ and $\Omega_-:=\{u<0\}$ are both nonempty and that there is no critical point of $u$ of level zero in $\Omega$. Then, either $\Omega_+$ admits a hole containing $\Omega_-$ or $\Omega_-$ admits a hole containing $\Omega_+$.
\end{corollaire}

\begin{proof}
We mention that $\Omega$ is connected (otherwise there would exist a connected component $\omega$ of $\Omega$ such that $\Gamma(\omega)=\Gamma(\Omega)$, and because $|\omega|<|\Omega|$, the dilation of $\omega$ would have a lower first eigenvalue than $\Omega$, which would be a contradiction). Moreover, thanks to Proposition \ref{prop:positivité au voisinage du bord}, we have the key property $\partial\Omega\cap\partial\Omega_+\cap\partial\Omega_-=\emptyset$. The $C^1$ relative regularity of $\Omega_\pm$ with respect to $\Omega$ comes from the assumption regarding the critical points of $u$ and from the implicit function theorem. Therefore, all the hypotheses are fulfilled for applying Lemma \ref{lemme:trou}, which gives the result.
\end{proof}

We will see in the next section that this geometric property of $\Omega_+$ can be exploited to show that the symmetrisation process used in the classical proof of the Rayleigh Conjecture in dimension $2,3$ loses too much information if $\Omega_-\neq \emptyset$.

\section{Improving Talenti's comparison principle}\label{sec:talenti}

The classical proof of the Rayleigh Conjecture in dimension $d=2,3$ relies on a symmetrisation technique. Roughly speaking, the Laplacian of the eigenfunction $u$ is symmetrised on the symmetrisations of $\Omega_+:=\{u>0\}$ and $\Omega_-:=\{u<0\}$ after what Talenti's comparison principle is astutely used, giving rise to an auxilary problem: the well-known two-ball problem. See \cite{ashbaugh-benguria} for details. Since symmetrisation lies at the center of this procedure, let us first recall the definition of signed Schwarz symmetrisation.

\begin{definition}\label{def:schwarz}
Let $\omega$ be a bounded open set and $f$ be a measurable function defined on $\omega$. The Schwarz symmetrisation $\omega^*$ of $\omega$ is any ball of same volume than $\omega$. The nonincreasing rearrangment of $f$ is the measurable function $f^*$ defined on $[0,|\omega|)$ to be the generalised inverse of the distribution function $\mu_{f}:(t\in\R\mapsto|\{x\in\omega:f(x)>t\}|)$ of $f$. In other words,
$$
\left\{
\begin{array}{ll}
f^*(s):=\mu_{f}^{[-1]}(s)=\inf\{\mu_{f}< s\}=\inf\{t:|\{f>t\}|< s\} & \forall s\in(0,|\omega|), \\
f^*(0):=ess\sup f.  
\end{array}
\right.
$$
Then, the signed Schwarz symmetrisation of $f$ is the measurable function $f^*$ defined on $\omega^*$ such that for all $x\in\omega^*$,
$$
f^*(x):=f^*(|B_{|x|}|),
$$
where $B_r$ denotes the ball of radius $r$ and of same center as $\omega^*$.
\end{definition}

\begin{remarque}
\begin{enumerate}
\item In the previous definition, we used the notation $f^*$ to designate two different functions, one defined on $[0,|\omega|)$, and the other defined on $\omega^*$. There will never be ambiguity in the following, precisely because the domains of the functions are not the same, hence there is no issue. Furthermore, a similar abuse of notation will be used in Definition \ref{def:talenti}.

\item Our definition of signed Schwarz symmetrisation follows the framework of \cite{kesavan}. Note that signed Schwarz symmetrisation differs from standard Schwarz symmetrisation (see \cite[Remark 1.1.2]{kesavan}). Indeed, classically, the Schwarz symmetrisation of some function $f$ corresponds to $|f|^*$. The advantage of our choice is that, even when $f$ changes sign, $f$ and $f^*$ remain equimeasurable.
\end{enumerate}
\end{remarque}

In 1981, Talenti realized that unsigned Schwarz symmetrisation were not adapted for dealing with signed functions such as the first eigenfunction of the bilaplacian. However, he did not resort to signed Schwarz symmetrisation, and rather introduced in \cite{talenti81} a special rearrangment in the following way.

\begin{definition}\label{def:talenti}
Let $\omega$ be and $f$ be a measurable function defined on $\omega$. We define for all $s\in[0,|\omega|)$
$$
f^\dagger(s)=f_+^*(s)-f_-^*(|\omega|-s).
$$
Then, the Talenti symmetrisation of $f$ is the function $f^\dagger$ defined on $\omega^*$ such that for all $x\in \omega^*$,
$$
f^\dagger(x):=f^\dagger(|B_{|x|}|).
$$
\end{definition}

\begin{remarque}
By convention, $f_+^*$ (resp. $f_-^*$) means $(f_+)^*$ (resp. $(f_-)^*$).
\end{remarque}

At this point, one shall wonder what is the relation between signed Schwarz symmetrisation and Talenti symmetrisation. Actually they coincide almost everywhere (see Lemma \ref{lemme:les symétrisations signées coincident} in appendix \ref{annexe:talenti}). Nevertheless, as we shall see, Talenti's formulation turns out to be sometimes convenient (see in particular Lemma \ref{lemme:symmetrisation de talenti}). This is probably why Talenti introduced it.

Let us now recall the plain comparison principle of Talenti (see \cite[Theorem 3.1.1]{kesavan}), which is the cornerstone of the classical proof of the Rayleigh Conjecture in dimension $d=2,3$.

\begin{theoreme}\label{thm:talenti}
Let $\omega$ be a bounded open set. Let $f\in L^2(\omega)$ and let $u\in H_0^1(\omega)$ solve
\begin{equation*}
\left\{\begin{array}{rcll}
-\Delta u & = & f & in\quad\omega,\\
u & = & 0 & on\quad\partial\omega.
\end{array}\right.
\end{equation*}
Let $v\in H_0^1(\omega^*)$ be the solution to
\begin{equation*}
\left\{\begin{array}{rcll}
-\Delta v & = & f^* & in\quad \omega^*,\\
v & = & 0 & on\quad\partial \omega^*.
\end{array}\right.
\end{equation*}
Then, if $u>0$ in $\omega$,
\begin{equation*}
v\geq u^*\qquad\text{a.e. in }\omega^*.
\end{equation*}
\end{theoreme}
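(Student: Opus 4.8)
The plan is to follow the classical rearrangement argument that compares the Dirichlet energy and the $L^2$ norm of $u$ with those of $v$ through the level-set function. First I would fix $t>0$ and work with the level set $\Omega_t:=\{u>t\}$. Since $u>0$ in $\omega$ and $u\in H_0^1(\omega)$, for a.e.\ $t$ the set $\partial\Omega_t\cap\omega$ is a smooth hypersurface (Sard), $|\nabla u|\neq 0$ on it, and one may integrate the equation $-\Delta u=f$ over $\Omega_t$. By the divergence theorem,
\begin{equation*}
-\int_{\partial\Omega_t}\partial_n u\,d\sigma=\int_{\Omega_t}f,
\end{equation*}
so that, writing $\mu(t)=|\Omega_t|$ and noting $\partial_n u=-|\nabla u|$ on $\partial\Omega_t$,
\begin{equation*}
\int_{\partial\Omega_t}|\nabla u|\,d\sigma=\int_{\Omega_t}f\le\int_0^{\mu(t)}f^*(s)\,ds,
\end{equation*}
the last inequality being the standard fact that the integral of $f$ over a set of measure $\mu(t)$ is at most the integral of $f^*$ over $(0,\mu(t))$ (Hardy–Littlewood).

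Next I would bring in the coarea formula and the isoperimetric inequality. The coarea formula gives $-\mu'(t)=\int_{\partial\Omega_t}|\nabla u|^{-1}\,d\sigma$ for a.e.\ $t$, and then Cauchy–Schwarz yields
\begin{equation*}
\mathrm{Per}(\Omega_t)^2=\Bigl(\int_{\partial\Omega_t}1\,d\sigma\Bigr)^2\le\Bigl(\int_{\partial\Omega_t}|\nabla u|\,d\sigma\Bigr)\Bigl(\int_{\partial\Omega_t}|\nabla u|^{-1}\,d\sigma\Bigr)=\Bigl(\int_{\partial\Omega_t}|\nabla u|\,d\sigma\Bigr)(-\mu'(t)).
\end{equation*}
Combining with the isoperimetric inequality $\mathrm{Per}(\Omega_t)\ge d\,\omega_d^{1/d}\mu(t)^{(d-1)/d}$ (where $\omega_d=|B_1|$) and the bound on $\int_{\partial\Omega_t}|\nabla u|$ obtained above, I get the differential inequality
\begin{equation*}
d^2\omega_d^{2/d}\mu(t)^{2(d-1)/d}\le(-\mu'(t))\int_0^{\mu(t)}f^*(s)\,ds\qquad\text{for a.e.\ }t>0.
\end{equation*}
The key observation is that the corresponding chain of inequalities for $v$ on the ball $\omega^*$ is an \emph{equality}, because $v$ is radial, $\{v>t\}$ is a ball, and $\mathrm{Per}$ of a ball attains the isoperimetric bound; denoting by $\nu(t)=|\{v>t\}|$ one has $d^2\omega_d^{2/d}\nu(t)^{2(d-1)/d}=(-\nu'(t))\int_0^{\nu(t)}f^*(s)\,ds$ wherever $\nu'\neq 0$.

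Finally I would convert these into statements about the rearrangements and conclude. Dividing the differential inequality by $d^2\omega_d^{2/d}\mu(t)^{2(d-1)/d}$ and integrating against $-d\mu$ after the change of variables $s=\mu(t)$, one finds that $u^*(s)=\inf\{t:\mu(t)<s\}$ satisfies, for a.e.\ $s\in(0,|\omega|)$,
\begin{equation*}
-\frac{d}{ds}u^*(s)\le\frac{1}{d^2\omega_d^{2/d}}s^{-2(d-1)/d}\int_0^s f^*(\sigma)\,d\sigma,
\end{equation*}
while the analogous relation for $v^*$ is an equality. Since $u^*(|\omega|)=v^*(|\omega|)=0$ (both functions vanish on the boundary, using $u>0$ so $u^*\ge 0$ and the distribution function reaches $|\omega|$ at $t=0^+$), integrating the inequality from $s$ to $|\omega|$ gives $u^*(s)\le v^*(s)$ for all $s$, i.e.\ $u^*\le v$ pointwise a.e.\ on $\omega^*$ once one identifies the radial profiles. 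The main obstacle, as usual in Talenti-type arguments, is the measure-theoretic bookkeeping: justifying that the level sets are regular for a.e.\ $t$, that $\mu$ is absolutely continuous with the coarea/Cauchy–Schwarz step valid a.e., handling the possible flat parts of $\mu$ (where $\mu'=0$ and the differential inequality is read in the correct a.e.\ sense), and checking that the passage from a differential inequality in $t$ to one in $s$ is legitimate despite $u$ possibly not being $C^1$ and $|\nabla u|$ vanishing on a positive-measure set — all of which is standard and can be cited from \cite[Theorem 3.1.1]{kesavan}, but must be invoked carefully.
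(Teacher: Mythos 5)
Your argument is the classical Talenti symmetrisation proof, which is precisely what the paper cites from \cite[Theorem 3.1.1]{kesavan} for this statement, and which the paper itself reruns (with the sole modification at the isoperimetric step) in the proof of Theorem \ref{thm:talenti modifie}: level sets, divergence theorem and Hardy--Littlewood, coarea and Cauchy--Schwarz, isoperimetric inequality, then integration of the resulting differential inequality for $\mu$. So the approach is the intended one and the chain of estimates is correct.

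One step should be made explicit rather than deferred to ``identifying the radial profiles.'' At the end you obtain $u^*\le v^*$ and conclude $u^*\le v$; this silently uses $v=v^*$, i.e.\ that the radial profile of $v$ is nonincreasing. This is not automatic: $f^*$ may change sign (since only $u>0$, not $f\ge0$, is assumed), and if $v$ were radial but not monotone the level sets $\{v>t\}$ would be annuli, the isoperimetric and Hardy--Littlewood steps would then be strict for $v$ too, and $v^*$ would differ from $v$. Monotonicity does hold here: one integration of the radial ODE gives $-r^{d-1}v'(r)$ proportional to $\int_0^{|B_r|}f^*$, and since $f^*$ is nonincreasing, $s\mapsto\int_0^s f^*$ is concave, vanishes at $s=0$, and is nonnegative at $s=|\omega|$ because $\int_\omega f=\lim_{t\to0^+}\int_{\{u>t\}}f=\lim_{t\to0^+}\int_{\partial\{u>t\}}|\nabla u|\ge0$; hence $\int_0^s f^*\ge0$ for every $s$ and $v'\le0$. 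Alternatively --- and this is what Step 4 of the proof of Theorem 3.1.1 in \cite{kesavan} does, as does the paper in its proof of Theorem \ref{thm:talenti modifie} --- one can avoid $v^*$ entirely by solving the radial ODE for $v$ in closed form and comparing that explicit expression directly with the integral bound on $u^*(s)$ obtained from the differential inequality; that route also sidesteps the delicate passage from the inequality on $\mu'(t)$ to a pointwise inequality on $\frac{d}{ds}u^*(s)$, which as you note requires some care since $u^*$ need not be absolutely continuous.
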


\begin{remarque}
We mention that in his seminal paper \cite{talenti76}, Talenti uses the standard (unsigned) Schwarz symmetrisation, and hence ends up with a slightly weaker comparison principle than Theorem \ref{thm:talenti}. Indeed, one can deduce Theorem 1 of \cite{talenti76} from Theorem \ref{thm:talenti} combined with the maximum principle.
\end{remarque}

After these reminders, let us present the main result of this section. Its purpose is to refine Theorem \ref{thm:talenti} when $\omega$ satisfies the geometric property of $\Omega_+$, that is when $\omega$ has a hole.  Adapting the arguments proposed in \cite[section 3]{talenti76}, one obtains the following statement.

\begin{theoreme}\label{thm:talenti modifie}
Let $\omega$ be a bounded open set admitting a hole $T$. Let $f\in L^2(\omega)$ and let $u\in H_0^1(\omega)$ solve
\begin{equation*}
\left\{\begin{array}{rcll}
-\Delta u & = & f & in\quad\omega,\\
u & = & 0 & on\quad\partial\omega.
\end{array}\right.
\end{equation*}
Let $v\in H_0^1(\omega^*)$ be the solution to
\begin{equation*}
\left\{\begin{array}{rcll}
-\Delta v & = & f^* & in\quad \omega^*,\\
v & = & 0 & on\quad\partial \omega^*.
\end{array}\right.
\end{equation*}
Assume that $u\in C^2(\omega\cup\partial T)$ satisfies (\ref{hyp:u}) in $\omega$, that $\nabla u\neq0$ on $\partial T$, and that $u>0$ in $\omega$. Then,
\begin{equation*}
\kappa^2v\geq u^*\qquad\text{in }\omega^*,
\end{equation*}
where
$$
\kappa:=\frac{|\omega|^{\frac{d-1}{d}}}{(|\omega|+|T|)^{\frac{d-1}{d}}+|T|^{\frac{d-1}{d}}}<1.
$$
Moreover, in case of equality, $T$ and $\omega\cup\overline{T}$ are balls.
\end{theoreme}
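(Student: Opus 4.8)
The plan is to mimic Talenti's original argument from \cite{talenti76}, but to track carefully the geometry imposed by the hole $T$. The starting point is the co-area formula applied to the level sets of $u$. For $t>0$ set $\Omega_t:=\{u>t\}$, $\mu(t):=|\Omega_t|$, and let $\Omega_t^\star$ be the ball centered as $\omega^*$ with $|\Omega_t^\star|=\mu(t)$. Because of assumption (\ref{hyp:u}) together with $u>0$ in $\omega$, the only critical value of $u$ in the interior is its maximum, so almost every $t\in(0,\max u)$ is a regular value and $\partial\Omega_t\cap\omega$ is a smooth hypersurface; the hypothesis $\nabla u\neq 0$ on $\partial T$ ensures that, for $t$ small, $\partial\Omega_t$ also stays transverse to $\partial T$. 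First I would record the classical identities: integrating the equation on $\Omega_t$ and using $u=0$, $\nabla u\neq0$ on the relevant part of $\partial\omega$,
\[
-\mu'(t)\,\Big(\text{something}\Big)\ \le\ \int_{\Omega_t} f\,,\qquad
\Big(\text{Per}(\Omega_t)\Big)^2\ \le\ \Big(-\tfrac{d}{dt}\!\int_{\Omega_t}|\nabla u|\Big)\,\big(-\mu'(t)\big),
\]
the second being Cauchy--Schwarz on $\partial\Omega_t$. The novelty is the isoperimetric step: $\partial\Omega_t$ consists, for $t$ small, of an ``outer'' piece inside $\omega$ plus (part of) $\partial T$, so $\Omega_t$ is \emph{not} a set whose perimeter is bounded below by that of a ball of volume $\mu(t)$; instead $\Omega_t\cup\overline T$ \emph{is} (it has the topology of a ball-with-nothing-removed, by Corollary~\ref{corollaire:trou}-type reasoning internal to $\omega$), so $\text{Per}(\Omega_t\cup\overline T)\ge \text{Per}\big((\Omega_t\cup\overline T)^\star\big)$. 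Writing $\text{Per}(\Omega_t)=\text{Per}(\Omega_t\cup\overline T)+\text{Per}(T)-2\mathcal H^{d-1}(\partial T\cap \Omega_t)\ge \text{Per}(\Omega_t\cup\overline T)-\text{Per}(T)$ and then applying the isoperimetric inequality to both $\Omega_t\cup\overline T$ and $T$ gives the key quantitative bound
\[
\text{Per}(\Omega_t)\ \ge\ d\,\omega_d^{1/d}\Big((\mu(t)+|T|)^{\frac{d-1}{d}}-|T|^{\frac{d-1}{d}}\Big)
\ =\ \frac{1}{\kappa}\cdot d\,\omega_d^{1/d}\,\mu(t)^{\frac{d-1}{d}}\cdot\frac{(|\omega|+|T|)^{\frac{d-1}{d}}+|T|^{\frac{d-1}{d}}}{(\mu(t)+|T|)^{\frac{d-1}{d}}}\cdot\frac{(\mu(t)+|T|)^{\frac{d-1}{d}}-|T|^{\frac{d-1}{d}}}{\mu(t)^{\frac{d-1}{d}}},
\]
and here I would check that the $t$-dependent factor is minimized at $\mu(t)=|\omega|$ (equivalently $t\to0$), so that uniformly $\text{Per}(\Omega_t)\ge \kappa^{-1}\,\text{Per}(\Omega_t^\star)$. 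Feeding this improved isoperimetric inequality into the chain above, exactly as in Talenti, yields a differential inequality for $u^*$ of the form $-\big(u^*\big)'(s)\le \kappa^2\cdot(\text{the right-hand side one gets for }v)$, and integrating from $s=|\omega|$ (where $u^*=0$) upward gives $u^*\le \kappa^2 v$ pointwise after the identification of signed Schwarz and Talenti symmetrizations; comparing $f$ with $f^*$ uses the Hardy--Littlewood inequality in the usual way.

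For the equality case I would trace back: equality forces equality in Cauchy--Schwarz on each $\partial\Omega_t$ (so $|\nabla u|$ is constant on level sets), equality in the Hardy--Littlewood step (so $f$ is already a symmetric decreasing rearrangement suitably placed), and — crucially — equality in both isoperimetric inequalities used, for $\Omega_t\cup\overline T$ and for $T$, for a.e.\ small $t$; by the rigidity of the isoperimetric inequality this means $T$ is a ball and $\Omega_t\cup\overline T$ is a ball (concentric with it), for a full-measure set of $t$, whence $\omega\cup\overline T=\bigcup_t \Omega_t\cup\overline T$ is a ball and $T$ is a ball, which is the claim. The main obstacle I anticipate is not the formula-pushing but making the topological/measure-theoretic claim ``$\Omega_t\cup\overline T$ has the isoperimetric profile of a ball'' fully rigorous: one must know that $\overline T$ really is filling the hole of $\Omega_t$ for a.e.\ $t$ (not just for $t=0$), which is where assumption (\ref{hyp:u}) is used — it guarantees no new holes are created as $t$ increases since no interior critical value is crossed — and where the relative $C^2$-regularity of $u$ up to $\partial T$ and $\nabla u\neq0$ on $\partial T$ are needed to control the geometry of $\partial\Omega_t$ near $\partial T$. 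I would isolate this as a lemma (``for a.e.\ $t\in(0,\max u)$, $T$ is a hole for $\Omega_t$ and $\Omega_t\cup\overline T$ is simply connected with connected boundary'') and prove it by a continuation/implicit-function-theorem argument along the lines of Corollary~\ref{corollaire:trou}, applied to the regular level sets rather than to the nodal set.
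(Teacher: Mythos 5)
Your overall plan matches the paper's proof — Cauchy--Schwarz on level sets, an improved isoperimetric inequality exploiting the hole, integrate the resulting differential inequality, trace equality back through the isoperimetric rigidity — but the key isoperimetric step contains two errors that break the argument.

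First, a topological error. You state (and propose as a lemma) that ``$T$ is a hole for $\Omega_t$'' for a.e.\ $t$. This is false: since $u=0$ on $\partial T$ and $\nabla u\neq 0$ there, for every $t>0$ the level set $\partial\Omega_t\cap\omega=\{u=t\}$ lies at positive distance from $\partial T$, so $\partial T\not\subseteq\partial\Omega_t$ and $T$ fails the definition of a hole. What is true — and is exactly the content of Lemma~\ref{lemme:trous} in the paper, proved by a gradient-flow argument — is that $\Omega_t$ has a \emph{strictly larger} hole $T_t\supseteq T$, obtained by sweeping $\partial T$ along the flow of $\nabla u/|\nabla u|^2$ up to level $t$. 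This distinction is not cosmetic; it is what ultimately lets you pass from $|T_t|$ to the fixed quantity $|T|$ via $T\subseteq T_t$.

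Second, and more damaging, a sign error. Because $T_t$ is a hole for $\Omega_t$, we have $\partial T_t\subseteq\partial\Omega_t$, hence $|\partial(\Omega_t\cup\overline{T_t})|=|\partial\Omega_t|-|\partial T_t|$. Applying the isoperimetric inequality to \emph{both} $\Omega_t\cup\overline{T_t}$ and $T_t$ and rearranging gives
\[
|\partial\Omega_t|\ \geq\ C_d\Big[(\mu(t)+|T_t|)^{\frac{d-1}{d}}+|T_t|^{\frac{d-1}{d}}\Big]\ \geq\ C_d\Big[(\mu(t)+|T|)^{\frac{d-1}{d}}+|T|^{\frac{d-1}{d}}\Big],
\]
with a \emph{plus} sign. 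Your decomposition produces a \emph{minus} sign, $|\partial\Omega_t|\gtrsim(\mu+|T|)^{\frac{d-1}{d}}-|T|^{\frac{d-1}{d}}$, which is strictly weaker than the trivial $|\partial\Omega_t|\geq C_d\mu^{\frac{d-1}{d}}$ and in fact degenerates to zero as $\mu\to 0$. With the correct plus sign, the $t$-dependent factor $(1+|T|/\mu)^{\frac{d-1}{d}}+(|T|/\mu)^{\frac{d-1}{d}}$ is increasing in $|T|/\mu$, hence decreasing in $\mu$, and is therefore minimised at $\mu=|\omega|$, yielding exactly $\kappa^{-1}$; with your minus sign the corresponding factor is decreasing in $|T|/\mu$, the infimum over $t$ is $0$, and the uniform lower bound you claim (``minimised at $\mu(t)=|\omega|$'') does not hold, so no $\kappa$ is produced. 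Once these two points are corrected — use the growing hole $T_t$ and apply the isoperimetric inequality additively to $\Omega_t\cup\overline{T_t}$ and to $T_t$ — the rest of your argument, including the treatment of the equality case, goes through as written.
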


\begin{remarque}
\begin{enumerate}
\item Observing that $\kappa<1$, we understand that our inequality $\kappa^2v\geq u^*$ is better than Talenti's one $v\geq u^*$ obtained with Theorem \ref{thm:talenti}.
\item Thanks to our regularity assumptions, the equality $\kappa^2v\geq u^*$ holds not only almost everywhere as in Theorem \ref{thm:talenti}, but everywhere in $\omega^*$.
\end{enumerate}
\end{remarque}

Before proving Theorem \ref{thm:talenti modifie}, we need a technical lemma. Indeed, we have to ensure that the geometric assumption made on $\omega=\{u>0\}$ in Theorem \ref{thm:talenti modifie} holds more generally on the upper level sets $\{u>t\}$ for all $0\leq t<\max_{\overline{\omega}}u$. Actually, this fact is not true in general without any assumption on $u$. However, due to our central assumption (\ref{hyp:u}), it will hold as stated below.

\begin{lemme}\label{lemme:trous}
Let $\omega$ be a bounded open set admitting a hole $T$ and $u\in C^2(\omega\cup\partial T)$ be positive and satisfying (\ref{hyp:u}) in $\omega$. Assume moreover that $u=0$ and $\nabla u\neq0$ on $\partial T$. Then, for all\/ $0\leq t<\sup_{\omega} u$, $\omega_t:=\{u>t\}$ has a hole $T_t\supseteq T$.
\end{lemme}

\begin{proof}
Since $u:\omega\to\R$ verifies (\ref{hyp:u}), it is a Morse function on $\omega\setminus\{u=\sup_\omega u\}$. Then, driven by the gradient near $\partial T\subseteq\partial\omega$, we will build as in Morse theory an open set $T_t$ which satisfies $T\subseteq T_t$, $T_t\cap\omega\subseteq\{u<t\}$, and $\partial T_t\subseteq\{u=t\}$. To do so, set $X=\frac{\nabla u}{|\nabla u|^2}$, well defined on $\omega\cup\partial T\setminus\{u=\sup_\omega u\}$, and let $\phi_t$ be the flow associated with $X$. We check that $u\circ\phi_{s_2}-u\circ\phi_{s_1}=s_2-s_1$, so that, $\forall x\in\partial T$, $u(\phi_s(x))=s$. Consequently, $T_t:=T\bigcup(\cup_{0\leq s<t}\phi_s(\partial T))$, which is open, satisfies the above requirements.

Then, immediately, $T\subseteq T_t$ and $T_t\cap\omega_t=\emptyset$. It remains to check that $\partial T_t\subseteq\partial\omega_t$. Let $x\in\partial T_t$ and $\epsilon >0$. Recall that $u(x)=t$ and that $\nabla u(x)\neq 0$ since (\ref{hyp:u}) is assumed. Then, because $u$ is $C^2$, we can write a Taylor expansion at $x$: for all $s>0$, set $y_s:=x+s\frac{\nabla u(x)}{|\nabla u(x)|}$, then
$$
u(y_s)=u(x)+\nabla u(x)\cdot(y_s-x)+o(|y_s-x|)=t+s|\nabla u(x)|+o(s).
$$
Consequently, if $s$ is small enough such that $y_s\in B_\epsilon(x)$ and that $o(s)\geq -s|\nabla u(x)|/2$, we get $u(y_s)\geq t+s|\nabla u(x)|/2>t$, or in other words $y_s\in\omega_t$. Therefore, $x\in\partial\omega_t$.
\end{proof}

\begin{remarque}\label{remarque: U}
Roughly speaking, the proof of Lemma \ref{lemme:trous} is based on the fact that the upper level sets of a function do not change topology between critical values. This clarifies the purpose of assumption (\ref{hyp:u}), which precisely rules out the possibility for the function to have critical values (except extremal ones). Thus, we understand that assumption (\ref{hyp:u}) made on the first eigenfunction $u$ of the optimal shape $\Omega$ in Theorem \ref{thm:faber-krahn U} shall be relaxed as long as it is ensured that the upper level sets of $u|_{\{u>0\}}$ and $-u|_{\{u<0\}}$ do not change topology. For instance, instead of (\ref{hyp:u}), one could ask $u$ to satisfy:

\noindent\emph{
For $v=u$ and $v=-u$, $\overline{\{v>t\}}$ is homeomorphic to $\overline{\{v>0\}}$ for each $0<t<\max\limits_{\overline{\Omega}} v$.
}

Actually, we believe that the points responsible for topology changes in the upper level sets are only \textbf{weak} saddle points (a weak saddle point is a critical point which is not a local \textbf{strict} extremum).
From this fact, one observes that

\noindent\emph{
As long as $u$ admits no weak saddle point in $\Omega$, $u$ has empty nodal set, and hence $\Omega$ is a ball.
}

Indeed, if $u$ had a nodal set, $\Omega_+$ would have a hole $T$ (Corollary \ref{corollaire:trou}). Hence, if moreover $u$ had no weak saddle point, the upper level sets $\{u>t\}$ for $0<t<\max_{\overline{\Omega}} u$ would also have holes containing $T$. Eventually, $\{u=\max_{\overline{\Omega}} u\}$ would have a hole containing $T$ (note that the definition of a hole actually makes sense not only for open sets). In particular, any point in $\{u=\max_{\overline{\Omega}} u\}$ would not be a strict maximum, hence it would be a weak saddle point, a contradiction with the assumption.
\end{remarque}

Thanks to Lemma \ref{lemme:trous}, it becomes possible to prove Theorem \ref{thm:talenti modifie}.

\begin{proof}[Proof of Theorem \ref{thm:talenti modifie}]
We follow the proof of Theorem \ref{thm:talenti} given in section 3 of \cite{talenti76}. Setting for all $0<t<\max_{\overline{\omega}} u$, $\omega_t:=\{u>t\}$, and $n_t$ the outward normal to the boundary of $\omega_t$ (note that $\partial\omega_t$ is regular due to assumption (\ref{hyp:u})), the divergence formula gives
$$
\int_{\omega_t}f=-\int_{\omega_t}\Delta u=-\int_{\partial\omega_t}\nabla u\cdot\vec{n_t}.
$$
But because of (\ref{hyp:u}), $n_t=-\frac{\nabla u}{|\nabla u|}$, so that
$$
\int_{\omega_t}f=\int_{\partial\omega_t}|\nabla u|.
$$
Now denote $\mu(t)=|\omega_t|$ the distribution function of $u$, and observe, thanks to Theorem 2.2.3 of \cite{kesavan}, that
$$
\mu'(t)=-\int_{\partial\omega_t}\frac{1}{|\nabla u|}.
$$
Moreover, thanks to Cauchy-Schwarz inequality,
\begin{equation*}
|\partial\omega_t|=\int_{\partial\omega_t}\sqrt{\frac{|\nabla u|}{|\nabla u|}}\leq\left(\int_{\partial\omega_t}\frac{1}{|\nabla u|}\int_{\partial\omega_t}|\nabla u|\right)^{1/2}=\sqrt{-\mu'(t)\int_{\omega_t}f}.
\end{equation*}
Hence,
\begin{equation}\label{eq: preuve talenti schwarz}
|\partial\omega_t|^2\leq-\mu'(t)\int_{\omega_t}f.
\end{equation}
At this point the classical proof uses the isoperimetric inequality to justify that
\begin{equation}\label{eq:inégalité isoperimetrique}
\frac{|\partial\omega_t|}{|\omega_t|^{\frac{d-1}{d}}}\geq \frac{|\partial\omega_t^*|}{|\omega_t^*|^{\frac{d-1}{d}}}=:C_d.
\end{equation}
But thanks to the geometric assumption, one is able to improve this inequality. Indeed, because $\omega$ has a hole $T$, Lemma \ref{lemme:trous} implies that $\omega_t$ has a hole $T_t$ containing $T$. In particular, due to the definition of a hole, $|\partial\left(\omega_t\cup\overline{T_t}\right)|=|\partial\omega_t|-|\partial T_t|$ and $|\omega_t\cup\overline{T_t}|=|\omega_t|+|T_t|$. Consequently, applying (\ref{eq:inégalité isoperimetrique}) to $\omega_t\cup\overline{T_t}$ rather than to $\omega_t$, one gets
\begin{equation*}
\frac{|\partial \omega_t|-|\partial T_t|}{(|\omega_t|+|T_t|)^{\frac{d-1}{d}}}\geq C_d.
\end{equation*}
Using once again (\ref{eq:inégalité isoperimetrique}) with $T_t$ this time, and then recalling that $T\subseteq T_t$, we find
\begin{equation*}
|\partial\omega_t|\geq C_d(|\omega_t|+|T_t|)^{\frac{d-1}{d}}+|\partial T_t|\geq C_d\left[(|\omega_t|+|T_t|)^{\frac{d-1}{d}}+|T_t|^{\frac{d-1}{d}}\right]\geq C_d\left[(\mu(t)+|T|)^{\frac{d-1}{d}}+|T|^{\frac{d-1}{d}}\right].
\end{equation*}
Plugging this into (\ref{eq: preuve talenti schwarz}),
\begin{equation*}
C_d^2\mu(t)^{\frac{2(d-1)}{d}}\left[\left(1+\frac{|T|}{\mu(t)}\right)^{\frac{d-1}{d}}+\left(\frac{|T|}{\mu(t)}\right)^{\frac{d-1}{d}}\right]^2\leq-\mu'(t)\int_{\omega_t}f.
\end{equation*}
Because $\mu(t)\leq|\omega|$, the previous inequality yields
\begin{equation*}
1\leq\frac{1}{\left[\left(1+\frac{|T|}{|\omega|}\right)^{\frac{d-1}{d}}+\left(\frac{|T|}{|\omega|}\right)^{\frac{d-1}{d}}\right]^2}\frac{-\mu'(t)}{C_d^2\mu(t)^{\frac{2(d-1)}{d}}}\int_{\omega_t}f=\kappa^2\frac{-\mu'(t)}{C_d^2\mu(t)^{\frac{2(d-1)}{d}}}\int_{\omega_t}f.
\end{equation*}
Now use Lemma \ref{lemme:symétrisation de Talenti croissante} in appendix \ref{annexe:talenti}:
$$
\int_{\omega_t}f=\int_{\omega_t} f|_{\omega_t}=\int_{\omega_t^*} \left(f|_{\omega_t}\right)^*\leq\int_{\omega_t^*} f^*|_{\omega_t^*}=\int_{\omega_t^*}f^*=\int_0^{\mu(t)}f^*.
$$
Consequently,
$$
1\leq\kappa^2\frac{-\mu'(t)}{C_d^2\mu(t)^{\frac{2(d-1)}{d}}}\int_0^{\mu(t)}f^*.
$$
Then we integrate both sides between $0$ and $t$ to find that
\begin{equation*}
t\leq\kappa^2\int_{\mu(t)}^{|\omega|}\frac{1}{C_d^2r^{\frac{2(d-1)}{d}}}\int_0^{r}f^*.
\end{equation*}
Applying this to $t=u^*(s)$, one gets, thanks to the fact that $u^*:[0,|\omega|)\to\R_+$ is the generalized inverse of the non increasing function $\mu:\R_+\to[0,|\omega|)$ (see section 2 \cite{talenti76}),
\begin{equation*}
u^*(s)\leq\kappa^2\int_{s}^{|\omega|}\frac{1}{C_d^2r^{\frac{2(d-1)}{d}}}\int_0^{r}f^*.
\end{equation*}
Finally, because $v$ is known to be radially symmetric, $-\Delta v=f^*$ in $\omega^*$ turns into an ordinary differential equation, the solution of which is explicit (see Step 4 of the proof of Theorem 3.1.1 in \cite{kesavan} for details), hence one is able to write
\begin{equation*}
u^*(x)\leq\kappa^2\int_{|B_{|x|}|}^{|\omega|}\frac{1}{C_d^2r^{\frac{2(d-1)}{d}}}\int_0^{r}f^*=\kappa^2v(x).
\end{equation*}

Regarding the equality case, note that if $u^*=\kappa^2 v$, then all the previous inequalities turn into equalities, and, in particular, thanks to the equality case in the isoperimetric inequality, for almost each $0<t<\max u$, $T_t$ and $\omega_t\cup\overline{T_t}$ are balls. Therefore, each $\omega_t$ is the set difference of two balls $B_{r_t}(x_t)\subseteq B_{R_t}(X_t)$, and
$$
\omega=\{u>0\}=\bigcup\limits_{t>0}\{u>t\}=\bigcup\limits_{t>0}\omega_t
$$
is the monotonic union of those differences. Up to extraction, assume that $x_t\to x$, $X_t\to X$, $r_t\to r$ and $R_t\to r$ when $t\to0$. The monotonicity of the union shows that $\omega\subseteq\{z:|z-x|\geq r,|z-X|\leq R\}$. Since $\omega$ is open, actually $\omega\subseteq B_R(X)\setminus \overline{B_r}(x)$. Due to the definition of $x$, $X$, $r$, and $R$, this inclusion turns into an equality, which implies that $T$ and $\omega\cup\overline{T}$ are balls.
\end{proof}

Thanks to our enhanced comparison principle, it is possible to bound from below the eigenvalue $\Gamma(\Omega)$ by an asymmetric two-ball problem. The next section is devoted to the derivation of this lower bound.

\section{Derivation of an asymmetric two-ball problem}\label{sec:probleme aux deux boules}

Proceeding as in \cite{ashbaugh-benguria}, but using our modified comparison principle, we get the following.

\begin{proposition}\label{prop:two-balls problem}
Let\/ $\Omega$ be a \reg\/ optimal shape for (\ref{eq:pb}). Let $a$ (resp. $b$) be the radius of $\Omega_+^*$ (resp. $\Omega_-^*$). Then, if the first eigenfunction on $\Omega$ satisfies (\ref{hyp:u}),
\begin{equation}\label{eq:pb deux boules}
\Gamma(\Omega)\geq\min_{v,w}\frac{\int_{B_a}(\Delta v)^2+\int_{B_b}(\Delta w)^2}{K(a,b)^4\int_{B_a}v^2+\int_{B_b} w^2}=:\mu(a,b),
\end{equation}
where
\begin{equation}\label{eq:K}
K(a,b)=\frac{a^{d-1}}{(a^d+b^d)^{\frac{d-1}{d}}+b^{d-1}},
\end{equation}
and the minimum is taken among radial functions $v\in H^2(B_a)$ and $w\in H^2(B_b)$ satisfying, for all $x\in\partial B_a$ and $y\in\partial B_b$,
$$
v(x)=w(y)=0, \quad a^{d-1}\partial_rv(x)=b^{d-1}\partial_rw(y), \quad \Delta v(x)+\Delta w(y)=0.
$$
Moreover, in case of equality in (\ref{eq:pb deux boules}), $\Omega$ is a ball.
\end{proposition}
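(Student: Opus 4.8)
The plan is to follow the Talenti–Ashbaugh–Benguria recipe, substituting our enhanced comparison principle (Theorem~\ref{thm:talenti modifie}) for the classical one at the key step, and then discretising the resulting two-function problem down to two radial problems on balls. First I would normalise $u$ so that $\int_\Omega u^2=1$ and split the Rayleigh quotient along the nodal decomposition: writing $f:=\Delta u$, I have $\int_\Omega(\Delta u)^2 = \int_{\Omega_+}f^2 + \int_{\Omega_-}f^2$ and $1=\int_{\Omega_+}u^2+\int_{\Omega_-}u^2$. If one of $\Omega_\pm$ is empty, Szegő's argument (the one-sign case) gives the conclusion directly, so assume both are nonempty; by Theorem~\ref{thm:condition d'optimalité} $\Gamma(\Omega)$ is simple and by Proposition~\ref{prop:positivité au voisinage du bord} together with assumption (\ref{hyp:u}) and the implicit function theorem, the nodal set carries no critical point of level zero, so Corollary~\ref{corollaire:trou} applies. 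After possibly swapping $u$ and $-u$, $\Omega_+$ has a hole $T$ containing $\Omega_-$. The eigenfunction restricted to $\Omega_+$ solves $-\Delta u_+ = g_+$ on $\Omega_+$ with $u_+\in H_0^1(\Omega_+)$, where $g_+ := -f|_{\Omega_+}$, and likewise $-\Delta(-u)|_{\Omega_-} = g_-$ on $\Omega_-$; the regularity $u\in C^{3,\beta}(\overline\Omega)$ established in Proposition~\ref{prop:positivité au voisinage du bord} gives the $C^2$ hypotheses needed in Theorem~\ref{thm:talenti modifie}, and the nonvanishing of $\nabla u$ on $\partial T$ is part of our standing assumptions on the nodal geometry.

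Next I would symmetrise. On $\Omega_+$ apply Theorem~\ref{thm:talenti modifie} with the hole $T$: letting $v$ solve $-\Delta v = g_+^*$ on $(\Omega_+)^*=B_a$ with zero Dirichlet data, one obtains $\kappa^2 v \ge u^*$ pointwise on $B_a$, where $\kappa = |\Omega_+|^{\frac{d-1}{d}}/\bigl((|\Omega_+|+|T|)^{\frac{d-1}{d}}+|T|^{\frac{d-1}{d}}\bigr)$. Since $|\Omega_+|=|B_a|$ and $|T|\ge|\Omega_-|=|B_b|$, and since the map $t\mapsto a^{d-1}/((a^d+t)^{\frac{d-1}{d}}+t^{\frac{d-1}{d}})$ is decreasing in $t$, we get $\kappa \le K(a,b)$, hence $K(a,b)^2 v \ge u^*$ on $B_a$. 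On $\Omega_-$ apply the plain Talenti principle (Theorem~\ref{thm:talenti}) — or Theorem~\ref{thm:talenti modifie} with no hole — to get $w$ solving $-\Delta w = g_-^*$ on $(\Omega_-)^*=B_b$ with $w\ge(-u)^*$ on $B_b$. Equimeasurability of rearrangements gives $\int_{B_a}(g_+^*)^2 = \int_{\Omega_+}f^2$ and $\int_{B_b}(g_-^*)^2 = \int_{\Omega_-}f^2$, so $\int_\Omega(\Delta u)^2 = \int_{B_a}(\Delta v)^2 + \int_{B_b}(\Delta w)^2$. For the denominator, $\int_{\Omega_+}u^2 = \int_{B_a}(u^*)^2 \le K(a,b)^4\int_{B_a}v^2$ and $\int_{\Omega_-}u^2 = \int_{B_b}((-u)^*)^2 \le \int_{B_b}w^2$, so $1 \le K(a,b)^4\int_{B_a}v^2 + \int_{B_b}w^2$. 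Combining, $\Gamma(\Omega) = \int_\Omega(\Delta u)^2 \ge \bigl(\int_{B_a}(\Delta v)^2+\int_{B_b}(\Delta w)^2\bigr)/\bigl(K(a,b)^4\int_{B_a}v^2+\int_{B_b}w^2\bigr)$.

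It then remains to check that the pair $(v,w)$ is admissible for the minimisation defining $\mu(a,b)$, i.e. that it satisfies the three interface conditions. Both $v,w$ are radial by uniqueness of the symmetrised problems and vanish on the respective boundary spheres, giving $v(x)=w(y)=0$. The flux condition $a^{d-1}\partial_r v(x) = b^{d-1}\partial_r w(y)$ comes from integrating the equations: $-\int_{B_a}g_+^* = \int_{\partial B_a}\partial_r v = |\partial B_a|\partial_r v(x)$ and similarly for $w$, while $\int_{B_a}g_+^* = \int_{\Omega_+}(-f) = -\int_{\partial\Omega_+}\partial_n u$ and $\int_{B_b}g_-^* = \int_{\Omega_-}(-(-f)) = \int_{\Omega_-}\Delta u$; using that on the common nodal interface $\partial T\supseteq$ (part of $\partial\Omega_+\cap\partial\Omega_-$) the inner and outer normal derivatives of $u$ are opposite, and that on $\partial\Omega\cap\partial\Omega_+$ we have $\Delta u = \pm\alpha$ constant by Theorem~\ref{thm:condition d'optimalité}, one reconciles the two flux integrals — this bookkeeping, keeping track of which pieces of $\partial\Omega_\pm$ lie on $\partial\Omega$ versus on the nodal set and of the fact that the hole $T$ may strictly contain $\Omega_-$, is the main technical obstacle. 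Finally, the condition $\Delta v(x)+\Delta w(y)=0$ follows because $\Delta v = -g_+^*$ and at $s=|B_a|$ (the boundary) $g_+^*(|B_a|) = \operatorname{ess\,inf} g_+ = $ the boundary value of $-\Delta u$ on the outer component of $\partial\Omega_+$, which equals $\mp\alpha$; likewise $\Delta w(y) = -g_-^*(|B_b|) = \pm\alpha$, and these cancel. Taking the infimum over all admissible radial pairs yields $\Gamma(\Omega)\ge\mu(a,b)$. For the equality case, if $\Gamma(\Omega)=\mu(a,b)$ then every inequality above is an equality; in particular equality in Theorem~\ref{thm:talenti modifie} forces $T$ and $\Omega_+\cup\overline T$ to be balls, whence $\Omega_-$ fills the hole exactly and $\Omega$ is itself a ball (or, tracing back, $\Omega_-=\emptyset$, again a ball).
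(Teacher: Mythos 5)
Your overall strategy — plugging the refined Talenti comparison into the Ashbaugh--Benguria symmetrisation scheme — is the paper's, and your numerator identity, the $K(a,b)$-weighted denominator bound, and the flux condition all hold for your construction. The genuine gap is in verifying the third admissibility condition $\Delta v(x)+\Delta w(y)=0$. You build $v$ and $w$ by symmetrising the \emph{restrictions} $-\Delta u|_{\Omega_+}$ over $\Omega_+$ and $\Delta u|_{\Omega_-}$ over $\Omega_-$ separately. With these local rearrangements, the boundary value of $\Delta v$ on $\partial B_a$ equals $-\operatorname{ess\,inf}_{\Omega_+}(-\Delta u)=\operatorname{ess\,sup}_{\Omega_+}\Delta u$, and that of $\Delta w$ equals $-\operatorname{ess\,inf}_{\Omega_-}\Delta u$. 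You assert these are $\mp\alpha$ by invoking Theorem~\ref{thm:condition d'optimalité}, but that theorem controls $\Delta u$ only on $\partial\Omega$. Since $\Delta(\Delta u)=\Gamma u>0$ in $\Omega_+$, $\Delta u$ is subharmonic there and its supremum lives on $\partial\Omega_+$; however $\partial\Omega_+$ also contains the interior nodal set $\partial T\cap\Omega$, where $\Delta u$ is uncontrolled and may exceed $\alpha$. The analogous issue occurs on $\Omega_-$. Hence $\Delta v(x)+\Delta w(y)=\operatorname{ess\,sup}_{\Omega_+}\Delta u-\operatorname{ess\,inf}_{\Omega_-}\Delta u$ need not vanish, the pair $(v,w)$ need not be admissible, and $\Gamma(\Omega)\ge\mu(a,b)$ does not follow.

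The paper avoids this by symmetrising $f:=-\Delta u$ \emph{globally} over $\Omega$, setting $-\Delta v=f^*|_{B_a}$ and $-\Delta w=(-f)^*|_{B_b}$. The numerator identity then comes not from plain equimeasurability but from Lemma~\ref{lemme:symmetrisation de talenti} (using $|\Omega_+|+|\Omega_-|=|\Omega|$ via unique continuation), and the condition $\Delta v(x)+\Delta w(y)=0$ becomes a \emph{structural} identity of the Talenti rearrangement: with $|B_a|+|B_b|=|\Omega|$,
\[
f^\dagger(|B_a|)+(-f)^\dagger(|B_b|)=\bigl[f_+^*(|B_a|)-f_-^*(|B_b|)\bigr]+\bigl[f_-^*(|B_b|)-f_+^*(|B_a|)\bigr]=0,
\]
so no identification of the actual extrema of $\Delta u$ is needed. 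The comparison estimates $w\ge u_-^*$ and $K(a,b)^2v\ge u_+^*$ are then recovered via Lemma~\ref{lemme:symétrisation de Talenti croissante} and the maximum principle. Two lesser remarks: in the equality case you also need the rigidity of the \emph{classical} Talenti principle on $\Omega_-$ to conclude that $\Omega_-$ is a ball (the refined principle gives $T$ and $\Omega_+\cup\overline T$ are balls, but not by itself that $\Omega_-$ fills $T$; connectedness of $\Omega$ is the final ingredient); and the flux condition is simpler than your bookkeeping suggests, since $\int_\Omega\Delta u=\int_{\partial\Omega}\partial_n u=0$ directly from $u\in H_0^2(\Omega)$.
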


\begin{proof}
Assume that $u$ changes sign otherwise there is no issue. As in \cite{talenti81} and \cite{ashbaugh-benguria} one symmetrises the sign-changing functions
$$
f:=-\Delta u, \qquad g:=\Delta u.
$$
Now the solutions of the Laplace equations in $B_a$ with data $f^*$, denoted $v$ and in $B_b$ with data $g^*$, say $w$, are kind of spherical rearrangements of $u_+$ and $u_-$:
\begin{equation*}
\left\{\begin{array}{rcll}
-\Delta v & = & f^*|_{B_a} & in\quad B_a, \\
v & = & 0 & on\quad\partial B_a,
\end{array}\right.
\qquad
\left\{\begin{array}{rcll}
-\Delta w & = & g^*|_{B_b} & in\quad B_b, \\
w & = & 0 & on\quad\partial B_b.
\end{array}\right.
\end{equation*}
We will compare $\Gamma(\Omega)$ with some quotient involving $v$ and $w$. For that, recall $f^*=f^\dagger$ and $g^*=g^\dagger$ almost everywhere (Lemma \ref{lemme:les symétrisations signées coincident}, appendix \ref{annexe:talenti}), and apply Lemma \ref{lemme:symmetrisation de talenti} with $p=2$, and $\omega_\pm=\Omega_\pm$. This is possible since $u$ cannot vanish on a set of positive measure due to the unique continuation principle \cite{pederson,protter}, which justifies that assumption $|\Omega_+|+|\Omega_-|=|\Omega|$ of Lemma \ref{lemme:symmetrisation de talenti} is fullfield. We find
\begin{equation}\label{eq:numerateur pb deux boules}
\int_\Omega(\Delta u)^2=\int_{B_a}\left(\Delta v\right)^2+\int_{B_b}\left(\Delta w\right)^2.
\end{equation}
For a later use, we need also to apply Lemma \ref{lemme:symmetrisation de talenti} with $p=1$, which gives
\begin{equation}\label{eq:condition de bord pb deux boules}
\int_\Omega\Delta u=\int_{B_a}\Delta v-\int_{B_b}\Delta w.
\end{equation}
Thanks to (\ref{eq:numerateur pb deux boules}), the numerator in the Rayleigh quotient $\Gamma(\Omega)$ shall be written in terms of the $L^2$ norms of $\Delta v$ and $\Delta w$. At this stage, we would like to estimate the denominator in the Rayleigh quotient $\Gamma(\Omega)$ in terms of the $L^2$ norms of $v$ and $w$. In the classical proof of the Rayleigh Conjecture, noticing that $\int_{\Omega}u^2=\int_{\Omega_+} u_+^2+\int_{\Omega_-} u_-^2=\int_{B_a}(u_+^*)^2+\int_{B_b}(u_-^*)^2$, this is done using Theorem \ref{thm:talenti}.

More precisely, on the one hand, thanks to Lemma \ref{lemme:symétrisation de Talenti croissante}, $-\Delta w=g^*|_{B_b}\geq \left(g|_{\Omega_-}\right)^*$ in $B_b$, and on the other hand, $-\Delta u_-=g|_{\Omega_-}$ in $\Omega_-$. Then, Theorem \ref{thm:talenti} combined with the maximum principle yields $w\geq u_-^*$ in $B_b$. Analogously, we get $v\geq u_+^*$ in $B_a$. But here this last inequality can be improved.

Indeed, with Corollary \ref{corollaire:trou}, we know that $\Omega_+$ has a hole $T$ containing $\Omega_-$. Moreover, as (\ref{hyp:u}) has been assumed to hold in $\Omega$, clearly it holds in $\Omega_+$ and even $\nabla u\neq 0$ on $\partial T$. Therefore, we apply Theorem \ref{thm:talenti modifie} with $\omega=\Omega_+$ and combine it as before with the maximum principle to obtain
\begin{equation*}
u_+^*\leq \left[\frac{|\Omega_+|^{\frac{d-1}{d}}}{\left(|\Omega_+|+|T|\right)^{\frac{d-1}{d}}+|T|^{\frac{d-1}{d}}}\right]^2v\leq \left[\frac{|\Omega_+|^{\frac{d-1}{d}}}{\left(|\Omega_+|+|\Omega_-|\right)^{\frac{d-1}{d}}+|\Omega_-|^{\frac{d-1}{d}}}\right]^2v.
\end{equation*}
Since $|\Omega_+|=|B_1|a^d$ and $|\Omega_-|=|B_1|b^d$, we get
\begin{equation*}
u_+^* \leq \left[\frac{a^{d-1}}{\left(a^d+b^d\right)^{\frac{d-1}{d}}+b^{d-1}}\right]^2 v= K(a,b)^2v.
\end{equation*}
To sum up,
\begin{align*}
&\int_\Omega(\Delta u)^2=\int_{B_a}(\Delta v)^2+\int_{B_b}(\Delta v)^2,\\
&\int_\Omega u^2 = \int_{B_a}(u_+^*)^2+\int_{B_b}(u_-^*)^2\leq K(a,b)^4\int_{B_a} v^2+\int_{B_b} w^2.
\end{align*}
Therefore,
\begin{equation*}
\Gamma(\Omega)\geq \frac{\int_{B_a}(\Delta v)^2+\int_{B_b}(\Delta w)^2}{K(a,b)^4\int_{B_a}v^2+\int_{B_b} w^2}.
\end{equation*}
Before concluding, we emphasize that for every $x\in B_a$ and every $y\in B_b$,
\begin{enumerate}
\item $v(x)=0=w(y)$;
\item due to (\ref{eq:condition de bord pb deux boules}),
$$
|\partial B_a|\partial_rv(x)-|\partial B_b|\partial_rw(y)=\int_{\partial B_a}\partial_nv-\int_{\partial B_b}\partial_n w=\int_{B_a}\Delta v-\int_{B_b}\Delta w=\int_\Omega \Delta u=0;
$$
\item  $\Delta v(x)+\Delta w(y)=f^\dagger(|B_a|)+(-f)^\dagger(|B_b|)=f_+^*(|B_a|)-f_-^*(|B_b|)+f_-^*(|B_b|)-f_+^*(|B_a|)=0$.
\end{enumerate}
Therefore the couple $(v,w)$ is admissible and hence $\Gamma(\Omega)\geq\mu(a,b)$. Eventually, note that $\mu(a,b)$ is well-posed (see \cite[Appendix 2]{ashbaugh-benguria}).

Regarding the rigidity result, we observe that the equality case implies that there are equalities both in Talenti's comparison principle $w\geq u_-^*$ and in our enhanced comparison principle $K(a,b)^2v\geq u_+^*$. On the one hand, due to the equality case in Theorem \ref{thm:talenti modifie} this means that $T$ and $\Omega_+\cup\overline{T}$ are balls. On the other hand, due to \cite[section 3.2]{kesavan}, this shows that $\Omega_-$ is a ball (contained in $T$). But as $\Omega$ is open and connected (see the proof of Corollary \ref{corollaire:trou}), the only possibility for $\Omega$ is to be a ball.
\end{proof}

\section{Solving the asymmetric two-ball problem}\label{sec:resolution deux boules}

As noted in \cite{ashbaugh-benguria}, the conjecture will hold if one is able to show that
$$
\mu(0,1)=\mu(1,0)=\min_{\substack{a,b\in[0,1] \\ a^d+b^d=1}}\mu(a,b).
$$
Indeed, it is always possible to assume that the radius of $\Omega^*$ is equal to $1$. Then, thanks to Proposition \ref{prop:two-balls problem} (and to the unique continuation principle \cite{pederson,protter} yielding $a^d+b^d=1$),
\begin{equation*}
\Gamma(\Omega)\geq\mu(a,b)\geq\min_{\substack{a,b\in[0,1] \\ a^d+b^d=1}}\mu(a,b)\geq\mu(0,1)=\Gamma(B_1).
\end{equation*}

This is exactly the statement of the next result. Let us mention at this point that it seems difficult to avoid technicalities, since the value of $\mu(a,b)$ has to be clarified, leading to explicit computations. This issue was already present in the resolution of the symmetric two-ball problem, and it will be intensified here, the asymmetry complicating the analysis. In particular, in order to understand the assumptions of the theorem, we need to introduce several notations. In the following, $J_\nu$ and $I_\nu$ are the Bessel and modified Bessel functions of order $\nu$. The function $f_\nu$ is defined in accordance with the formula
\begin{equation}
f_\nu(r)=r^{d-1}\left[\frac{J_{\nu+1}(r)}{J_\nu(r)}+\frac{I_{\nu+1}(r)}{I_\nu(r)}\right].
\end{equation}
The first positive zero of $J_\nu$ is denoted $j_\nu$ and $k_\nu$ is the first positive zero of $f_\nu$. For each $a\in[0,1]$, the number $b\in[0,1]$ such that $a^d+b^d=1$ is denoted $b(a)$. Then, $K(a)$ is a simplified notation for $K(a,b(a))$ where $K(a,b)$ is defined in (\ref{eq:K}). We define $a_I$ to be the value of $a$ such that $b(a)=j_\nu/k_\nu$. On the other hand, $a_S$ is the value of $a$ when $aK(a)=j_\nu/k_\nu$. Eventually, we set
\begin{equation}
\begin{split}
T_1(a) = & (aK(a))^dK'(a)/K(a),\\
g_\nu(r) = & \frac{1}{r^{d-1}}\left[\frac{J_{\nu+1}}{J_\nu}(r)-\frac{I_{\nu+1}}{I_\nu}(r)\right],\\
G_1(a) = & [aK'(a)+K(a)](k_\nu K(a))^{d-1}g_\nu(k_\nu aK(a)),\\
G_2(a) = & k_\nu^{d-1}g_\nu(k_\nu b(a)).
\end{split}
\end{equation}
Using the above notations, it becomes possible to state the main result of this section.

\begin{theoreme}\label{thm:pb aux 2 boules}
Let $d\geq 4$, set $\nu=d/2-1$, and assume that
\begin{equation}\label{eq:condition necessaire kv jv}
2\frac{j_\nu^d}{k_\nu^d}+\frac{j_\nu}{k_\nu}>1.
\end{equation}
Assume moreover that
\begin{enumerate}
\item\label{it:hypothese Fv<0 sur AI} There exists a sequence $0=x_0<x_1<...<x_n<x_{n+1}=a_I$ such that, for each $i\in]0,n]$,
\begin{equation}\label{eq:Fv en xi}
f_\nu(k_\nu K(x_{i+1})x_{i+1})+K(x_i)^df_\nu(k_\nu b(x_i)) \leq 0,
\end{equation}
and
\begin{equation}\label{eq:Gv en xi}
G_1(x_1)+G_2(x_0)\leq0.
\end{equation}
\item\label{it:hypothese Fv<0 sur AS} There exists a sequence $a_S=y_0<y_1<...<y_m<y_{m+1}=1$ such that for each $i\in[0,m[$,
\begin{equation}\label{eq:Fv en yi}
f_\nu(k_\nu K(y_{i+1})y_{i+1})+f_\nu(k_\nu b(y_i))\leq0,
\end{equation}
and
\begin{align}\label{eq:Fv'>0 sur ]ym,1[}
2T_1(y_m) -f_\nu(k_vb(y_m))|g_\nu(k_vK(y_m)y_m)|\left[\frac{(d-1)}{y_m^d}\left(1+\frac{1}{b(y_m)}\right)+1\right] -[f_\nu g_\nu](k_vb(y_m))\geq0.
\end{align}
\end{enumerate}
Then,
\begin{equation*}
\min_{\substack{a,b\in[0,1] \\ a^d+b^d=1}}\mu(a,b)=\mu(0,1)=\mu(1,0).
\end{equation*}
\end{theoreme}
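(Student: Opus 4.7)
The plan is to first express $\mu(a) := \mu(a,b(a))$ via its Euler--Lagrange system, then reduce the inequality $\mu(a)\geq k_\nu^4$ to a sign condition on an explicit transcendental function. Writing the Euler--Lagrange equations for the Rayleigh quotient in Proposition \ref{prop:two-balls problem}, the radial minimizers $v$ on $B_a$ and $w$ on $B_b$ solve $\Delta^2 v = \mu K(a)^4 v$ and $\Delta^2 w = \mu w$ with the prescribed boundary and matching conditions, and so they decompose as linear combinations of $J_\nu$ and $I_\nu$ with arguments $\mu^{1/4}K(a)r$ and $\mu^{1/4}r$ respectively. Eliminating the four free constants from the Dirichlet and matching conditions produces a transcendental dispersion relation $F(a,\mu) = 0$ whose first positive root is $\mu(a)$; the functions $f_\nu$ and $g_\nu$ appearing in the statement are precisely its building blocks (the former from the clamped condition, the latter from the Wronskian matching of normal derivatives). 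At $a \in \{0,1\}$ the problem degenerates to the pure clamped plate on $B_1$, giving $\mu(0) = \mu(1) = k_\nu^4 = \Gamma(B_1)$. By monotonicity of $\mu \mapsto F(a,\mu)$ in a neighborhood of its first positive root, the target $\mu(a) \geq k_\nu^4$ is equivalent to a sign condition $F(a,k_\nu^4) \leq 0$.

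I then split $[0,1]$ according to how the arguments $k_\nu aK(a)$ and $k_\nu b(a)$ of $f_\nu$ sit relative to $j_\nu$, using the fact that $k_\nu > j_\nu$ for the clamped-plate spectrum. By construction, $a_I$ is where $k_\nu b(a)$ crosses the pole $j_\nu$ of $f_\nu$, and $a_S$ where $k_\nu aK(a)$ does; hypothesis (\ref{eq:condition necessaire kv jv}) is exactly what forces $a_I \leq a_S$ and guarantees that the middle range $[a_I, a_S]$ is nontrivial. On this middle range both arguments lie in $(0, j_\nu)$, and the sign condition on $F(\cdot, k_\nu^4)$ reduces there to a structural sign analysis of $f_\nu$ and $g_\nu$ on a pole-free interval, which goes through automatically. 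The difficulty lies in the two outer ranges $[0,a_I]$ and $[a_S,1]$.

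On $[0, a_I]$ the idea is to use the finite grid $0 = x_0 < x_1 < \ldots < x_{n+1} = a_I$ and exploit two opposite monotonicities: $a \mapsto k_\nu aK(a)$ is increasing and stays below $j_\nu$, while $a \mapsto K(a)^d f_\nu(k_\nu b(a))$ is monotone in the opposite direction between consecutive singularities. Consequently, for $a \in [x_i, x_{i+1}]$,
$$
F(a, k_\nu^4) \leq f_\nu\bigl(k_\nu K(x_{i+1}) x_{i+1}\bigr) + K(x_i)^d f_\nu\bigl(k_\nu b(x_i)\bigr),
$$
which is nonpositive precisely by hypothesis (\ref{eq:Fv en xi}). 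The grid provides a finite certificate for the sign of $F(\cdot, k_\nu^4)$ on $[0, a_I]$, and the numerical appendix referred to in the introduction furnishes such a grid for each $4 \leq d \leq 9$. A symmetric argument using the grid $a_S = y_0 < \ldots < y_{m+1} = 1$ together with (\ref{eq:Fv en yi}) handles $[a_S,1]$.

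The main obstacle is the endpoint analysis on the outermost subintervals $[0, x_1]$ and $[y_m, 1]$, where the naive monotonicity argument collapses because $F(\cdot, k_\nu^4)$ vanishes at $a = 0$ and $a = 1$. There, one must refine the estimate via a first-order Taylor expansion of $F(\cdot, k_\nu^4)$ at the two endpoints; this is precisely where the auxiliary quantities $G_1, G_2$, and $T_1$ enter, as they encode the derivatives of the two summands of $F$ at those points. Hypotheses (\ref{eq:Gv en xi}) and (\ref{eq:Fv'>0 sur ]ym,1[}) are exactly the sign conditions on these derivatives needed to ensure that $F(\cdot, k_\nu^4)$ departs from $0$ in the correct direction and does not return to zero before $x_1$ or after $y_m$. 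Combining the three regions yields $F(\cdot, k_\nu^4) \leq 0$ throughout $[0,1]$ with equality only at $\{0, 1\}$, whence $\mu(a) \geq k_\nu^4$ with equality only at $a \in \{0, 1\}$, which is the sought conclusion.
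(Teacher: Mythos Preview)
Your overall architecture is right—derive the dispersion relation via the Euler--Lagrange system, reduce to a sign condition on $F_\nu(k_\nu,\cdot)$, and handle the endpoints via derivative information—but the treatment of the middle interval $[a_I,a_S]$ contains a genuine error.

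You claim that on $[a_I,a_S]$ ``both arguments lie in $(0,j_\nu)$'' and that the sign condition $F(a,k_\nu^4)\le0$ then ``goes through automatically.'' The first claim is correct: for $a\in(a_I,a_S)$ one has $k_\nu aK(a)<j_\nu$ and $k_\nu b(a)<j_\nu$. But this forces both $f_\nu(k_\nu aK(a))$ and $f_\nu(k_\nu b(a))$ to be \emph{positive} (since $f_\nu(0)=0$, $f_\nu$ is increasing, and $f_\nu\to+\infty$ at $j_\nu^-$), so in fact $F_\nu(k_\nu,a)>0$ on the middle range, not $\le0$. The equivalence $\mu(a)\ge k_\nu^4\Longleftrightarrow F_\nu(k_\nu,a)\le0$ you invoke requires $k_\nu$ and $k(a)=\mu(a)^{1/4}$ to lie between the same consecutive poles of $r\mapsto f_\nu(rKa)+K^df_\nu(rb)$; on $(a_I,a_S)$ they do not, because the first pole sits at $\min\bigl(j_\nu/(Ka),\,j_\nu/b\bigr)>k_\nu$, while $k(a)$ lies strictly beyond that pole.

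The paper's argument on $[a_I,a_S]$ is instead geometric: the curve $\kappa=\{(k(a),a):a\in[0,1]\}$ is trapped between the pole arcs $\alpha_1=\{kKa=j_\nu\}$ and $\beta_1=\{kb=j_\nu\}$, and condition \eqref{eq:condition necessaire kv jv} is exactly what places their intersection point $M$ to the right of the line $\{k=k_\nu\}$. This forces $k(a)>k_\nu$ for $a\in[a_I,a_S]$ directly, without any sign analysis of $F_\nu$. Only on the outer ranges $A_I=[0,a_I)$ and $A_S=(a_S,1]$—where $k_\nu$ and $k(a)$ do sit between the same poles—does your sign-of-$F$ strategy apply; there your description of the zigzag/derivative split is essentially correct (though note that $g_\nu$ arises only through $\partial_aF_\nu$ in the endpoint analysis, not in the dispersion relation itself).
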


\begin{remarque}\label{remarque:pb aux 2 boules}
Let us point out that each of the assumptions of Theorem \ref{thm:pb aux 2 boules} might be checked numerically. This has been done for $d=4,5,6,7,8,9$ in appendix section \ref{sec:numerique}.
\end{remarque}

\begin{proof}
Fix for the moment an admissible couple $(a,b)$. As explained in \cite[Appendix 2]{ashbaugh-benguria}, the minimum $\mu(a,b)$ is obtained for two $H^2$ radial functions $v=v(r)$ and $w=w(r)$ satisfying
\begin{equation}\label{eq:minimisation pb deux boules, equation aux valeurs propres}
\Delta^2 v=K(a,b)^4\mu(a,b) v \text{ in } B_a,
\qquad
\Delta^2 w=\mu(a,b) w \text{ in } B_b,
\end{equation}
and the boundary conditions, for all $x\in \partial B_a$ and $y\in \partial B_b$,
\begin{equation}\label{eq:minimisation pb deux boules, condition de bord}
v(x)=w(y)=0, \qquad a^{d-1}\partial_rv(x)=b^{d-1}\partial_rw(y), \qquad \Delta v(x)+\Delta w(y)=0.
\end{equation}
Because $v,w$ are radial, (\ref{eq:minimisation pb deux boules, equation aux valeurs propres}) is actually an ODE of which any solution is of the form
\begin{align*}
& v(r)=[AJ_\nu(kKr)+BI_\nu(kKr)]r^{-\nu}, \\
& w(r)=[CJ_\nu(kr)+DI_\nu(kr)]r^{-\nu},
\end{align*}
where $\nu=d/2-1$, $k=k(a)=k(a,b):=\mu(a,b)^{1/4}$ and $K=K(a)=K(a,b)=\frac{a^{d-1}}{1+b^{d-1}}$. Using the previous expressions and the relations $J_{\nu}'(r)=\frac{\nu J_\nu(r)}{r}-J_{\nu+1}(r)$ and $I_{\nu}'(r)=\frac{\nu I_\nu(r)}{r}+I_{\nu+1}(r)$, one finds
\begin{align*}
& \partial_rv(r)=[-AJ_{\nu+1}(kKr)+BI_{\nu+1}(kKr)]kKr^{-\nu}, \\
& \partial_rw(r)=[-CJ_{\nu+1}(kr)+DI_{\nu+1}(kr)]kr^{-\nu}.
\end{align*}
Using this time the relation $J_{\nu+1}'(r)=J_\nu(r)-\frac{\nu+1}{r}J_{\nu+1}(r)$ and $I_{\nu+1}'(r)=I_\nu(r)-\frac{\nu+1}{r}I_{\nu+1}(r)$, we obtain
\begin{align*}
& \partial_r^2v(r)=\left(A\left[\frac{(2\nu+1)J_{\nu+1}(kKr)}{kKr}-J_\nu(kKr)\right]-B\left[\frac{(2\nu+1)I_{\nu+1}(kKr)}{kKr}-I_\nu(kKr)\right]\right)(kK)^2r^{-\nu}, \\
& \partial_r^2w(r)=\left(C\left[\frac{(2\nu+1)J_{\nu+1}(kr)}{kr}-J_\nu(kr)\right]-D\left[\frac{(2\nu+1)I_{\nu+1}(kr)}{kr}-I_\nu(kr)\right]\right)k^2r^{-\nu}.
\end{align*}
Then,
\begin{align*}
& \Delta v(r)=\left(-AJ_\nu(kKr)+BI_\nu(kKr)\right)(kK)^2r^{-\nu}, \\
& \Delta w(r)=\left(-CJ_\nu(kr)+DI_\nu(kr)\right)k^2r^{-\nu}.
\end{align*}
Eventually, (\ref{eq:minimisation pb deux boules, condition de bord}) yields a system of four linear equations involving $(A,B,C,D)$, that has only the solution $(0,0,0,0)$ unless the determinant of the system is $0$. Consequently, for the solution not to be trivial, one gets at the end of the day
\begin{align*}
0 = & 2a^{d-1}[I_\nu(kKa)I_\nu(kb)J_{\nu+1}(kKa)J_\nu(kb) + J_\nu(kKa)I_\nu(kb)I_{\nu+1}(kKa)J_\nu(kb)] \\
+ & 2Kb^{d-1}[I_\nu(kKa)I_\nu(kb)J_{\nu+1}(kb)J_\nu(kKa) + J_\nu(kKa)I_\nu(kb)J_{\nu+1}(kb)I_\nu(kKa)],
\end{align*}
which is equivalent to
\begin{equation*}
0=f_\nu(kKa)+K^df_\nu(kb),
\end{equation*}
where
\begin{equation*}
f_\nu(r)=r^{d-1}\left[\frac{J_{\nu+1}(r)}{J_\nu(r)}+\frac{I_{\nu+1}(r)}{I_\nu(r)}\right].
\end{equation*}

Therefore $k(a)$ is the first positive zero of $h_\nu:r\mapsto f_\nu(rKa)+K^df_\nu(rb)$. Hence, to conclude the proof, it remains to show that $k_\nu=k(0)=k(1)\leq k(a)$ for all $a\in[0,1]$. As shown in \cite{ashbaugh-benguria}, $f_\nu$ has some useful properties: $f_\nu(0)=0$; $f_\nu$ has simple poles at $j_{\nu,i}, i\in\N^*$ (the positive zeros of $J_\nu$); $f_\nu$ is continuous and increasing as soon as it is defined. Consequently $f_\nu(j_{\nu,i}^-)=+\infty$ and $f_\nu(j_{\nu,i}^+)=-\infty$. As explained in \cite{ashbaugh-benguria-laugesen}, it is useful to study the function of two variables $F_\nu(k,a)=f_\nu(kK(a)a)+K(a)^df_\nu(kb)$. The properties of $f_\nu$ quoted above justify that $F_\nu$ is increasing in its first variable and admits poles on the arcs $\alpha_i$ of equation $kKa=j_{\nu,i}$ and $\beta_i$ of equation $kb=j_{\nu,i}$. Then, $k(a)$ gives rise to an arc $\kappa=(k(a),a)$ joining the horizontal line $\{a=0\}$ to the horizontal line $\{a=1\}$ and lying in a connected component of the complement of the arcs $\alpha_i$ and $\beta_i$. More precisely, because $k$ remains between the first and the second pole of $h_\nu$, $\kappa$ is trapped between $\alpha_1$ and $\beta_1$. Thus one has the qualitative properties summarised in Figure \ref{fig:Fv}.

\begin{figure}[h!]
\centering
\includegraphics[width=\textwidth]{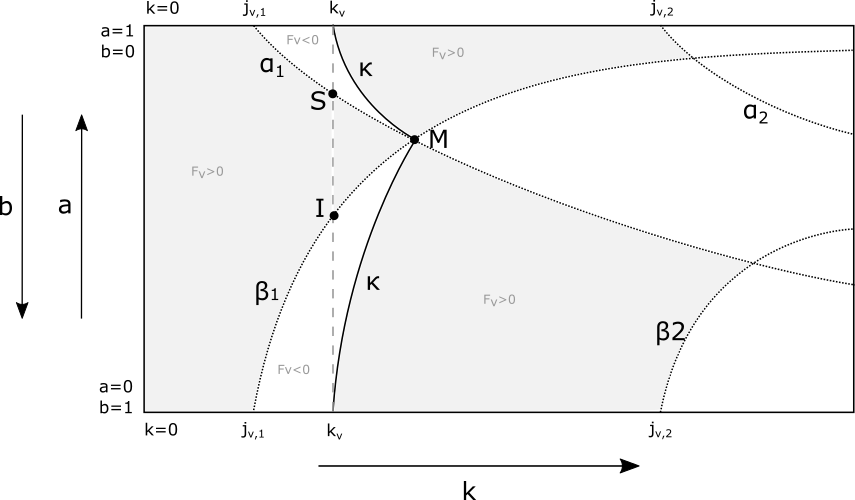}
\caption{Qualitative properties of the function $F_\nu$ in the $(k,a)$-plane.}
\label{fig:Fv}
\end{figure}

An interesting point in the $(k,a)$-plane is the point $M$ where $\alpha_1$ and $\beta_1$ intersect, because the previous discussion showed that $M\in\kappa$. At $M$, one has the relation $Ka=b$, or in other words $2b^d+b-1=0$. Note that the polynomial $P=2X^d+X-1$ satisfies $P(0)=-1$, $P(1)=2$ and is increasing between $0$ and $1$. If we note $b_M$ the root of $P$ between $0$ and $1$, and $a_M$ the corresponding value for $a$, then $k_M:=k(a_M)=\frac{j_{\nu,1}}{b_M}$ since $(k_M,a_M)\in\alpha_1$. As in \cite{ashbaugh-benguria}, one wonders whether $k_\nu<k_M$. To see this, one has to check if $b_M<\frac{j_{\nu,1}}{k_\nu}$, that is if $P(\frac{j_{\nu,1}}{k_\nu})>0$. Since by definition $j_\nu=j_{\nu,1}$, this condition turns into the next equation.
\begin{equation*}
%\tag{\ref{eq:condition necessaire kv jv}}
2\frac{j_\nu^d}{k_\nu^d}+\frac{j_\nu}{k_\nu}>1.
\end{equation*}
This necessary condition for $k_\nu$ to be the minimum of $k$ is the couterpart of equation (41) of \cite{ashbaugh-benguria}, which failed for $d>3$. In our situation, it is assumed that (\ref{eq:condition necessaire kv jv}) is fulfilled (see also Table \ref{tab:condition nécessaire}). This gives credence to Figure \ref{fig:Fv}, where $M$ is at the right of the vertical dashed line $\{k=k_\nu\}$.

Obviously, the fact that $k_\nu<k_M$ holds is not sufficient to claim that $k_\nu$ minimises $k$. As suggested by Figure \ref{fig:Fv}, one checks that each of $\alpha_1$ and $\beta_1$ intersect exactly once the line $\{k=k_\nu\}$. We denote by $S$ and $I$ the intersection points, and by $(k_S,a_S,b_I)$ and $(k_I,a_I,b_S)$ the coordinates of $S$ and $I$ respectively. These coordinates are characterised by the relations
\begin{equation}\label{eq:equations en I et S}
b_S=\frac{j_\nu}{k_\nu}, \qquad K(a_S)a_S=\frac{j_\nu}{k_\nu}.
\end{equation}
It is then clear that for $a\in[a_I,a_S]$, $k(a)>k_\nu$ (see Figure \ref{fig:Fv}). Hence it remains to show that for $a\in[0,a_I[\cup]a_S,1]=:A$ we have $k(a)\geq k_\nu$. To do so, thanks to the fact that $F_\nu$ is increasing in the variable $k$, it is enough to prove that $\forall a\in A$,
\begin{equation}\label{eq:Fv<0}
F_\nu(k_\nu,a)\leq 0.
\end{equation}
The very technical and laboured proof of this inequality is provided by Theorem \ref{thm:Fv<0} in appendix \ref{annexe:pb 2 boules}, which relies on equations (\ref{eq:Fv en xi}), (\ref{eq:Gv en xi}), (\ref{eq:Fv en yi}), and (\ref{eq:Fv'>0 sur ]ym,1[}).
\end{proof}

\section{Proof of Theorem \ref{thm:faber-krahn U} and final remarks}\label{sec:preuve theoreme}

Thanks to the analysis performed in sections \ref{sec:probleme aux deux boules} and \ref{sec:resolution deux boules}, the proof of Theorem \ref{thm:faber-krahn U} is rather immediate.

\begin{proof}[Proof of Theorem \ref{thm:faber-krahn U}]
Up to a dilation, assume that $|\Omega|=|B_1|$. Then, thanks to Proposition \ref{prop:two-balls problem}, we know that $\Gamma(\Omega)\geq \mu(a,b)$ (where $|B_a|=|\Omega_+|$ and $|B_b|=|\Omega_-|$). Moreover, due to the unique continuation principle \cite{pederson,protter}, the first eigenfunction does not vanish on a set of positive measure, hence $a^d+b^d=1$. Then, thanks to Theorem \ref{thm:pb aux 2 boules}, and to Remark \ref{remarque:pb aux 2 boules} we know that $\mu(a,b)\geq\mu(0,1)=\Gamma(B_1)$ whenever $4\leq d \leq 9$. Combining the inequalities, we get
$$
\Gamma(\Omega)\geq\Gamma(B_1).
$$
Moreover, in case of an equality, we have $\Gamma(\Omega)=\mu(a,b)$, and we conclude thanks to the equality case in Proposition \ref{prop:two-balls problem} that $\Omega$ is a ball.
\end{proof}

Theorem \ref{thm:faber-krahn U} mainly relies on the assumption (\ref{hyp:u}). Hence it is essential to understand to which extent (\ref{hyp:u}) has a chance to hold. For instance, one could seek for geometric conditions on $\Omega$ yielding (\ref{hyp:u}).
The easiest such condition one shall think about is maybe convexity.
Indeed, it is known that for second order operators, convexity often implies a unique critical point (hence a unique critical value) for the solutions of some particular equations (see \cite{grossi,garcia-melian} and also the unmissable \cite[chapter III]{kawohl}).
However, it is clear that convexity is not enough for showing that the first eigenfunction admits a unique critical value in the case of fourth order equations. To see this, consider classical counterexamples, such as domains with corners, in which it is known that the eigenfunctions of the bilaplacian oscillate \cite{coffman}, and consequently do not satisfy (\ref{hyp:u}).
Nevertheless, against these examples, one shall put forward that the shapes involved are not critical and hence not optimal. Therefore, one is led to wonder whether a first eigenfunction of a critical convex shape satisfies (\ref{hyp:u}).

At this time, we were not able to answer this central issue, and could only argue that if $u$ is a first eigenfunction on a $C^\infty$ strictly convex critical shape $\Omega\subseteq \R^2$, then $\Delta u+\sqrt{\Gamma(\Omega)}u$ admits only one critical point. This is a consequence of \cite[Proposition 4]{leylekian} and \cite[Theorem 1.2]{regibus-grossi-mukherjee}. Indeed, due to the convexity of $\Omega$, $\partial\Omega$ is connected, and due to its criticality, up to a sign change, $\frac{\Delta}{\sqrt{\Gamma(\Omega)}}u=\sqrt{\frac{2}{|\Omega|}}$ on $\partial\Omega$. Then, thanks to \cite[Proposition 4]{leylekian}, the function $z_u:=\frac{\Delta}{\sqrt{\Gamma(\Omega)}}+u-\sqrt{\frac{2}{|\Omega|}}$ satisfies
$$
\left\{
\begin{array}{rcll}
\Delta z_u & = &\sqrt{\Gamma(\Omega)}\left(z_u+ \sqrt{\frac{2}{|\Omega|}}\right) & in \quad\Omega,\\
z_u & < & 0 & in \quad\Omega,\\
z_u & = & 0 & on \quad\partial\Omega,
\end{array}
\right.
$$
and we conclude that $-z_u$ is a positive semi-stable solution. Hence, it has a unique critical point thanks to \cite[Theorem 1.2]{regibus-grossi-mukherjee}.

Besides the geometric conditions on $\Omega$ that would yield (\ref{hyp:u}), another interesting question is the sensitivity of (\ref{hyp:u}) under perturbations of $\Omega$. Actually, for small $C^4$ perturbations of the form $(I+\theta)\Omega=:\Omega_\theta$, it is possible to build a first eigenfunction $u_\theta$ on $\Omega_\theta$ varying continuously with $\theta$, in the sense that $\theta\in C^4(\R^d,\R^d)\mapsto  u_\theta\circ(I+\theta)\in C^{3,\gamma}(\overline{\Omega})$ is continuous in a neighbourhood of $\theta=0$. On the other hand, let us define the map $\#:C^2(\overline{\Omega})\to\N\cup\{\infty\}$, such that $\#v$ is the number of critical points of $v$ in $\Omega$. Then, one can prove that $\#$ is upper-semi continuous at any point $v$ having no degenerate critical point and such that $\Delta v\neq 0$ on $\partial\Omega$. Combining those two results, we obtain that if $\Omega$ is a critical shape, if $u_{0}$ has only one critical point, and if this critical point is nondegenerate, then also $u_\theta$ has only one critical point for all small enough $\theta$. In particular, $u_\theta$ satisfies (\ref{hyp:u}) for all small enough $\theta$. This applies to the case $\Omega=B$, and shows, thanks to Theorem \ref{thm:faber-krahn U}, that small $C^4$ nontrivial perturbations of the ball cannot be optimal, slightly generalising a known result (see \cite[Theorem 6.29]{gazzola-grunau-sweers}).

As a final remark, let us mention that, surprinsgly, assumption (\ref{hyp:u}) is similar to the condition given in Remark 3 of \cite{cianchi} for the one-dimensional nonnincreasing rearrangment to reduce the $L^p$ norm of the second derivative. Indeed, at least in dimension one, it is known that what prevents Schwarz symmetrisation from preserving positive $H_0^2$ functions are the possible critical values of those functions. Following this idea, it would be interesting to study to which extent assumption (\ref{hyp:u}) yields a P\'olya-Szegö type inequality for second-order derivatives.

%\input{parties/conclusion}

%% <== End of main content
%%%%%%%%%%%%%%%%%%%%%%%%%%%%%%%%%%%%%%%%%%%%%%%%%%%%%%%%%%%%%

%%%%%%%%%%%%%%%%%%%%%%%%%%%%%%%%%%%%%%%%%%%%%%%%%%%%%%%%%%%%%
%% APPENDICES
%%%%%%%%%%%%%%%%%%%%%%%%%%%%%%%%%%%%%%%%%%%%%%%%%%%%%%%%%%%%%
\appendix
%% ==> Write your text here or include other files.

\begin{center}
\Large Appendices
\end{center}

\section{Holes}\label{annexe:trou}

This appendix provides a proof for the geometric Lemma \ref{lemme:trou}. We recall that Lemma \ref{lemme:trou} claims that if a connected open set $\omega$ shall be divided into two open sets of which the boundaries do not meet simultaneously $\partial\omega$, then one of those two sets must have a hole.

%\LemmeTrou*
%\label{lemme:trou restate}

\begin{proof}[Proof of Lemma \ref{lemme:trou}]
We claim first that if $\gamma_+$ and $\gamma_-$ are connected components respectively of $\partial\omega_+$ and $\partial\omega_-$, such that $\gamma_+\cap\gamma_-\neq\emptyset$, then $\gamma_+=\gamma_-$. Indeed, otherwise $\gamma_+\cap\gamma_-$ would be a proper subset of $\gamma_+\cup\gamma_-$, hence its (relative) boundary would be nonnempty. But any point $p$ of such boundary is not only in $\gamma_+\cap\gamma_-$, but also in $\partial\omega$ (otherwise, using the regularity of $\omega_\pm$ and looking at a neighbourhood of $p$, we would obtain a contradiction). To sum up, there would exist a point in $\partial\omega_+\cap\partial\omega_-\cap\partial\omega$, a contradiction. This means that the connected components of $\partial\omega_+$ and $\partial\omega_-$ either coincide or are disjoint.

But, necessarily, there exists a connected component of $\partial\omega_+$ that intersects a connected component of $\partial\omega_-$, otherwise $\partial\omega_+\cap\partial\omega_-$ would be empty, and this would in turn mean that $\overline{\omega}$ is not connected (as it would be the disjoint union of the closed sets $\overline{\omega_+}$ and $\overline{\omega_-}$), and hence that $\omega$ is not connected (recall that the closure of a connected set is connected), which is impossible. This, and the previous discussion show that one is always able to find a connected component simultaneously of $\partial\omega_+$ and $\partial\omega_-$.

Let us denote $\Theta$ the set of unions of connected components shared by $\partial\omega_+$ and $\partial\omega_-$. The arguments above show that $\Theta$ is nonempty.
In the following, we will construct a maximal element $\tilde{\theta}$ of $\Theta$ in the sense that $\tilde{\theta}$ will enclose any other element of $\Theta$.
Indeed, as $\omega_\pm$ is regular relatively with respect to $\omega$, any element of $\Theta$ (which belongs to $\omega$ due to $\partial\omega\cap\partial\omega_+\cap\partial\omega_-=\emptyset$) form a compact $C^1$ hypersurface of $\R^d$.
Therefore, any connected component of an element of $\Theta$ is a $C^1$ compact connected hypersurface of $\R^d$.
Thanks to Jordan-Brouwer Separation Theorem for $C^1$ hypersurfaces \cite{lima}, such a connected component is then the boundary of a bounded connected open set.
Hence, sending any element $\theta\in\Theta$ to the union of the open sets delimeted by the connected components of $\theta$, one builds a well-defined (in general noninjective) map $\phi:\Theta\to\mathcal{T}(\R^d)$ (where $\mathcal{T}(\R^d)$ is the topology of $\R^d$).
The map $\phi$ has the properties that for any $\theta\in\Theta$, $\theta\subseteq\overline{\phi(\theta)}$ and $\partial\phi(\theta)\subseteq\theta$. Moreover, if $\theta\in\Theta$ is connected, $\phi(\theta)$ is also connected.
Then, define
$$
\tilde{T}:=\bigcup_{\theta\in\Theta}\phi(\theta),\quad\text{and}\quad \tilde{\theta}:=\partial\tilde{T}.
$$
One can show that:
\begin{enumerate}
\item\label{it:frontière de T dans Theta} $\tilde{\theta}\in\Theta$, in particular $\tilde{\theta}$ is $C^1$ regular.
\item\label{it:maximalité de T} For any $\theta\in\Theta$, $\theta\subseteq\overline{\tilde{T}}$.
\item\label{it:régularité tilde T} $\tilde{T}$ is open and $C^1$ regular.
\item\label{it:connexité de la frontière de tilde T} The connected components of $\tilde{T}$ have connected boundary.
\end{enumerate}
At this point, let us prove that $\tilde{T}^c$ intersects \textbf{at most} one of $\omega_+$ and $\omega_-$. Indeed, assume by contradiction that $\tilde{T}^c$ intersects both $\omega_+$ and $\omega_-$. Then, $u_+:=\omega_+\setminus\overline{\tilde{T}}$ and $u_-:=\omega_-\setminus\overline{\tilde{T}}$ are nonempty open sets, and moreover their closures intersect, otherwise the closure of $\omega\setminus\overline{\tilde{T}}$ would be disconnected as the disjoint union of two closed set. Then, $\omega\setminus\overline{\tilde{T}}$ would in turn be disconnected. But as $\omega$ is connected, as $\tilde{T}$ satisfies \ref{it:régularité tilde T}, \ref{it:connexité de la frontière de tilde T}, and as $\partial\tilde{T}\subseteq \omega$ (by \ref{it:frontière de T dans Theta} and because $\partial\omega\cap\partial\omega_+\cap\partial\omega_-=\emptyset$), this cannot occur due to Lemma \ref{lemme:connexe privé d'un ensemble ne recontrant pas sa frontiere} below. Therefore, $\overline{u_+}$ and $\overline{u_-}$ intersect, meaning that their boundaries intersect (as $u_+$ and $u_-$ remain disjoint). Let $\gamma$ be a connected component of $\partial u_+\cap\partial u_-$. Then, $\gamma\in\Theta$ by \ref{it:frontière de T dans Theta}, hence $\gamma\subseteq\overline{\tilde{T}}$ by \ref{it:maximalité de T}. We show that this implies $\tilde{T}\cap u_+\neq\emptyset$ or $\tilde{T}\cap u_-\neq\emptyset$, leading to a contradiction. Indeed, if $p\in\gamma$, by regularity of $\gamma\in\Theta$, there is a small ball $B$ centered at $p$ such that $\gamma$ divides $B$ into exactly two parts. Since $\gamma\subseteq\partial u_+\cap\partial u_-$ and since $u_+$ and $u_-$ are disjoint, those two parts correspond to $B\cap u_+$ and $B\cap u_-$. On the other hand, as $\gamma\subseteq\overline{\tilde{T}}$, $B$ meets $\tilde{T}$, and $\emptyset\neq B\cap\tilde{T}\subseteq (u_+\cap\tilde{T})\cup (u_-\cap\tilde{T})\cup\gamma$. In particular, since $B\cap\tilde{T}$ is open, either $u_+\cap\tilde{T}\neq\emptyset$ or $u_-\cap\tilde{T}\neq\emptyset$.

We have shown that $\tilde{T}^c$ intersects at most one of $\omega_+$ and $\omega_-$. On the other hand, $\tilde{T}^c$ intersects \textbf{at least} one of $\omega_+$ and $\omega_-$, otherwise $\omega\subseteq\tilde{T}$, hence $\tilde{\theta}\subseteq\partial\omega$. Then $\partial\omega\cap\partial\omega_+\cap\partial\omega_-=\emptyset$ would not hold. Therefore, $\tilde{T}^c$ intersects \textbf{exactly} one of $\omega_+$ and $\omega_-$. Changing the notations if needed, one can assume that $\tilde{T}^c$ intersects $\omega_+$, so that $\tilde{T}$ intersects $\omega_-$. Now set $T:=\tilde{T}\setminus\overline{\omega_+}$. Because $T$ is disjoint from $\omega_+$ and because \mbox{$\partial T\subseteq\partial\tilde{T}\cup\partial\omega_+\subseteq\partial\omega_+$}, we see that $T$ is a hole for $\omega_+$.

It remains only to check that $T$ contains $\omega_-$. But this comes from the fact that $\omega_-\subseteq\tilde{T}$ and from the definition of $T$.
\end{proof}

\begin{lemme}\label{lemme:connexe privé d'un ensemble ne recontrant pas sa frontiere}
Let $A$ be a connected open set of\/ $\R^d$, and $B\subseteq \R^d$ a bounded $C^1$ regular open set such that $\partial B\subseteq A$ and such that the boundary of any connected component of $B$ is connected. Then $A\setminus \overline{B}$ is connected.
\end{lemme}

\begin{proof}
As $A$ is open and connected, it is arcwise connected. Fix $x,y\in A\setminus B$ and $\Sigma$ an arc joining $x$ and $y$ in $A$. If there exists a connected component $b$ of $B$ such that $\Sigma\cap b\neq\emptyset$, any connected component $\sigma$ of $\Sigma\cap\overline{b}$ is an arc. Note that $\sigma(0)$ and $\sigma(1)$ belong to $\partial b$. But, by assumption $\partial b$ is connected, and, as $\partial b$ is a submanifold, it is then arcwise connected. Consequently, $\sigma(0)$ and $\sigma(1)$ might be joined by another arc $\tau$ in $\partial b\subseteq A$. Replacing $\sigma$ by $\tau$ in $\Sigma$, and doing the same for any connected component of $\Sigma\cap b$ and any connected component $b$ of $B$ intersecting $\Sigma$, we build an arc from $x$ to $y$ belonging to $A\setminus B$. Thus, $A\setminus B$ is arc-connected.

But, using the regularity of $B$, we can do better and show that $A\setminus\overline{B}$ is also arcwise connected. Indeed, if $B$ is $C^1$, one can define its normal vector $\vec{n}$ (and extend it to the whole $\R^d$). Then, if $\epsilon$ is  small enough, as $A$ is open, $B_\epsilon:=(I+\epsilon\vec{n})B$ satisfies the same hypothesis than $B$, but also the ancillary condition $A\setminus B_\epsilon\subseteq A\setminus\overline{B}$. Thus, one can find an arc from $x$ to $y$ through $A\setminus B_\epsilon$, hence $A\setminus\overline{B}$ is arcwise connected.
\end{proof}

\section{Symmetrisation}\label{annexe:talenti}

In this appendix, we prove some relations regarding signed symmetrisations. Lemma \ref{lemme:symmetrisation de talenti} is a useful property for proving Proposition \ref{prop:two-balls problem} which translates the equimeasurability of $f$ and $f^\dagger$ when the underlying domain is decomposed into two disjoint sets. Lemma \ref{lemme:les symétrisations signées coincident} asserts that the two natural signed rearrangments defined in section \ref{sec:talenti} - Talenti and signed Schwarz symmetrisations - actually coincide. Lemma \ref{lemme:symétrisation de Talenti croissante} is a generalisation of Hardy-Littlewood inequality used in the proof of Theorem \ref{thm:talenti modifie}.

\begin{lemme}\label{lemme:symmetrisation de talenti}
Let $p\in\N$. Let $\omega$ be a bounded open set and $f\in L^p(\omega)$. Let $\omega_+,\omega_-\subseteq\omega$ be disjoint sets such that $|\omega|=|\omega_+|+|\omega_-|$. Then
$$
\int_{\omega_+^*}\left(f^\dagger\right)^p+(-1)^p\int_{\omega_-^*}\left((-f)^\dagger\right)^p=\int_{\omega}f^p.
$$
\end{lemme}

\begin{proof}
We recall that since for any $s\in(0,|\omega|)$, $f_+^*(s)\neq 0\Leftrightarrow s<|\{f>0\}|$ and $f_-^*(|\omega|-s)\neq 0\Leftrightarrow s>|\omega|-|\{f<0\}|\geq|\{f>0\}|$, it is impossible for $f_+^*(s)$ and $f_-^*(|\omega|-s)$ to be simultaneously nonzero. Then, using Newton's binomial expansion, we get
$$
\int_{\omega_+^*}\left(f^\dagger\right)^p=\int_0^{|\omega_+|}\left(f^\dagger\right)^p=\int_0^{|\omega_+|}\left(f_+^*\right)^p(s)+(-1)^p\int_0^{|\omega_+|}\left(f_-^*\right)^p(|\omega|-s).
$$
Due to the assumption $|\omega|=|\omega_+|+|\omega_-|$, a change of variable gives
$$
\int_{\omega_+^*}\left(f^\dagger\right)^p=\int_0^{|\omega_+|}\left(f_+^*\right)^p(s)+(-1)^p\int_{|\omega_-|}^{|\omega|}\left(f_-^*\right)^p(s).
$$
Replacing $f$ with $-f$ and exchanging $\omega_+$ and $\omega_-$ in the previous relation, we find
$$
\int_{\omega_-^*}\left((-f)^\dagger\right)^p=\int_0^{|\omega_-|}\left(f_-^*\right)^p(s)+(-1)^p\int_{|\omega_+|}^{|\omega|}\left(f_+^*\right)^p(s).
$$
The combination of the last identities turns into the next one
$$
\int_{\omega_+^*}\left(f^\dagger\right)^p+(-1)^p\int_{\omega_-^*}\left((-f)^\dagger\right)^p=\int_{\omega^*}\left(f_+^*\right)^p+(-1)^p\int_{\omega^*}\left(f_-^*\right)^p.
$$
Thanks to the equimeasurability of Schwarz symmetrisation and using again Newton's expansion, we eventually get as desired
$$
\int_{\omega_+^*}\left(f^\dagger\right)^p+(-1)^p\int_{\omega_-^*}\left((-f)^\dagger\right)^p=\int_\omega\left(f_+\right)^p+(-1)^p\int_\omega\left(f_-\right)^p=\int_\omega f^p.
$$
\end{proof}

\begin{lemme}\label{lemme:les symétrisations signées coincident}
Let $\omega$ be a bounded open set and $f\in L^\infty(\omega)$. Then,
$$
f^*=f^\dagger, \quad \text{a.e. in }\omega^*.
$$
\end{lemme}

\begin{proof}
We will prove that $f^*$ and $f^\dagger$ have same moments. Then, because $f^*$ and $f^\dagger$ are bounded, this will show that they have same law, and in particular same distribution function. Since they are both radially symmetric and nonincreasing, their upper level sets will be balls of same volume. In particular, their upper level sets will coincide almost everywhere, and due to the layer-cake representation, $f^*$ and $f^\dagger$ will in turn coincide almost everywhere. To show that they share same moments, let us fix some $p\in\N$. First, by construction $f^*$ and $f$ are equimesurable, hence we have
$$
\int_\omega \left(f^*\right)^p=\int_\omega f^p.
$$
On the other hand, apply Lemma \ref{lemme:symmetrisation de talenti} with $\omega_+=\omega$ and $\omega_-=\emptyset$ to observe that
$$
\int_\omega \left(f^\dagger\right)^p=\int_\omega f^p,
$$
which concludes.
\end{proof}

\begin{lemme}\label{lemme:symétrisation de Talenti croissante}
Let $\omega$ be a bounded open set and $f$ a measurable function defined on $\omega$. Then, for all $A\subseteq\omega$,
$$
(f|_A)^*\leq f^*|_{A^*}.
$$
\end{lemme}

\begin{proof}
It is enough to prove a corresponding inequality for the distribution functions of $f|_A$ and $f^*|_{A^*}$, that is, for any $t\in\R$,
$$
|\{f>t\}\cap A|\leq|\{f^*>t\}\cap A^*|.
$$
Due to the relation $\{f^*>t\}=\{f>t\}^*$, the previous is equivalent to
$$
\int\1_A\1_{\{f>t\}}\leq \int\1_{A^*}\1_{\{f>t\}^*}=\int\1_{A}^*\1_{\{f>t\}}^*,
$$
which holds thanks to Hardy-Littlewood inequality.
\end{proof}

\section{Asymmetric two-ball problem}\label{annexe:pb 2 boules}

This section is devoted to the technical result needed to complete the proof of the central Theorem \ref{thm:pb aux 2 boules} (see inequality (\ref{eq:Fv<0})), stating that the asymmetric two-ball problem reaches its minimum when one ball vanishes.

\begin{theoreme}\label{thm:Fv<0}
Let $d\geq 4$, set $\nu=d/2-1$, and assume that
\begin{enumerate}
\item There exists a sequence $0=x_0<x_1<...<x_n<x_{n+1}=a_I$ such that (\ref{eq:Fv en xi}) holds for each $i\in]0,n]$, and (\ref{eq:Gv en xi}) holds.
\item There exists a sequence $a_S=y_0<y_1<...<y_m<y_{m+1}=1$ such that (\ref{eq:Fv en yi}) holds for each $i\in[0,m[$, and (\ref{eq:Fv'>0 sur ]ym,1[}) holds.
\end{enumerate}
Then, for every $a\in A=[0,a_I[\cup]a_S,1]$,
\begin{equation}\label{eq:Fv<0 2eme occurence}
F_\nu(k_\nu,a)\leq 0
\end{equation}
\end{theoreme}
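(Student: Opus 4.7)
The plan is to verify the inequality $F_\nu(k_\nu,\cdot)\leq 0$ piecewise, using the partitions provided in the hypotheses to cut $A=[0,a_I[\cup]a_S,1]$ into sub-intervals and treating generic versus boundary sub-intervals by different methods. Two elementary observations drive the argument: first, $F_\nu(k_\nu,0)=F_\nu(k_\nu,1)=0$ (using $K(0)=0$, $b(1)=0$, and $f_\nu(k_\nu)=0$); second, $a\mapsto K(a)$ and $a\mapsto aK(a)$ are strictly increasing while $a\mapsto b(a)$ is strictly decreasing on $[0,1]$, and $f_\nu$ is increasing on each of its continuity branches, with $f_\nu\geq 0$ on $(0,j_\nu)$ (the range of $k_\nu K(a)a$ on $(0,a_S)$) and $f_\nu\leq 0$ on $(j_\nu,k_\nu)$ (the range of $k_\nu b(a)$ on $(0,a_I)$ and also of $k_\nu K(a)a$ on $(a_S,1)$).

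On a generic interior sub-interval $[x_i,x_{i+1}]$ with $1\leq i\leq n$, these monotonicities combined with the sign data yield
\[
F_\nu(k_\nu,a)\leq f_\nu(k_\nu K(x_{i+1})x_{i+1})+K(x_i)^d f_\nu(k_\nu b(x_i)),
\]
where for the second (nonpositive) term the upper bound is attained when $K^d$ is smallest and $|f_\nu(k_\nu b)|$ smallest, namely at $a=x_i$. By (\ref{eq:Fv en xi}) this bound is $\leq 0$. An analogous argument on $[y_j,y_{j+1}]$ with $0\leq j\leq m-1$, using additionally $K(a)^d\leq K(1)^d=1$ to drop the prefactor in front of the (nonnegative) $f_\nu(k_\nu b)$ term, reduces the estimate to (\ref{eq:Fv en yi}).

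The delicate parts are the boundary sub-intervals $[0,x_1]$ and $[y_m,1]$, where the naive endpoint method produces only positive upper bounds (because of the degeneracies $K(0)^d=0$ and $b(1)=0$). Since $F_\nu(k_\nu,0)=0$ (respectively $F_\nu(k_\nu,1)=0$), it is enough to prove that $\partial_a F_\nu(k_\nu,\cdot)\leq 0$ on $[0,x_1]$ (respectively $\geq 0$ on $[y_m,1]$), and then integrate. To compute this derivative I would use the identity $f_\nu'(r)=r^{d-1}\bigl[2+f_\nu(r)g_\nu(r)\bigr]$, which comes from the Bessel recurrences $J_{\nu+1}'(r)=J_\nu(r)-\frac{\nu+1}{r}J_{\nu+1}(r)$ and its modified analogue, and which uses crucially the specific value $\nu=d/2-1$ to kill the coefficient $d-1-(2\nu+1)$. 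Plugging this identity into $\partial_a F_\nu(k_\nu,a)$ and estimating each Bessel-quotient factor by monotonicity on its branch yields a pointwise bound of $\partial_a F_\nu(k_\nu,a)$ on $[0,x_1]$ in terms of $G_1(x_1)+G_2(0)$, which is $\leq 0$ by (\ref{eq:Gv en xi}); on $[y_m,1]$, a symmetric analysis produces the lower bound encoded in (\ref{eq:Fv'>0 sur ]ym,1[}).

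The main obstacle is the derivative estimate near $a=0$: a direct Taylor expansion shows that the three terms of $\partial_a F_\nu(k_\nu,a)$ all behave like multiples of $a^{d^2-1}$, with leading coefficients $d$, $-(d-1)$, and $-1$ whose sum is identically zero. This exact cancellation forces one to track the subleading Bessel-function contributions with precision, and it is precisely this phenomenon that explains why the hypothesis (\ref{eq:Gv en xi}) involves the specific coupling of $G_1$ and $G_2$ (which carry the $g_\nu$ corrections responsible for the sign of the subleading term) rather than any simpler endpoint inequality.
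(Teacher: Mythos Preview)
Your zigzag argument on the interior sub-intervals $[x_i,x_{i+1}]$ ($1\leq i\leq n$) and $[y_j,y_{j+1}]$ ($0\leq j\leq m-1$) is correct and coincides with the paper's. The genuine gap is in the boundary intervals $[0,x_1]$ and $[y_m,1]$.

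First, hypothesis (\ref{eq:Gv en xi}) does not by itself give a pointwise bound on $\partial_a F_\nu(k_\nu,\cdot)$. After inserting $f_\nu'(r)=r^{d-1}[2+f_\nu(r)g_\nu(r)]$, the derivative splits into a part $2k_\nu^d T_1+T_2$ (with $T_1=(aK)^dK'/K$, $T_2=dK'K^{d-1}f_\nu(k_\nu b)$) that has nothing to do with $G_1,G_2$, plus a $g_\nu$-correction part. Condition (\ref{eq:Gv en xi}) only controls the latter. For the former one has $T_1>0$, $T_2<0$ on $]0,a_I[$ with exact leading-order cancellation, and the paper disposes of it via the Bessel estimate $f_\nu(r)>\tfrac{2}{d}r^d$ on $(0,j_\nu)$, coming from the partial-fraction expansion $J_{\nu+1}/J_\nu(r)=2r\sum_m(j_{\nu,m}^2-r^2)^{-1}$. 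Your sketch omits this ingredient.

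Second, the paper does \emph{not} prove $\partial_a F_\nu(k_\nu,\cdot)\leq 0$ on all of $[0,x_1]$. It proves the weaker fact that $\partial_a F_\nu(k_\nu,a)<0$ at every point where $F_\nu(k_\nu,a)=0$, because the substitution $f_\nu(k_\nu Ka)=-K^d f_\nu(k_\nu b)$ is what collapses the derivative into the manageable form $2k_\nu^dT_1+T_2+T_3$ with $T_3=k_\nu a^{d-1}(G_1+G_2)f_\nu(k_\nu Ka)$. This weaker statement must then be complemented by a separate asymptotic analysis showing $F_\nu(k_\nu,a)<0$ for $a$ close to $0$, which requires expanding $f_\nu$ to second order near $0$ and near $k_\nu$ and invoking $S_\nu(k_\nu)<0$ (Lemma~\ref{lemme:Snu}). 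Your ``integrate from $0$'' strategy bypasses this but would need the unrestricted sign of $\partial_a F_\nu$, which you have not established.

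Third, the treatment of $[y_m,1]$ is not symmetric to that of $[0,x_1]$. On $]a_S,1[$ one cannot prove $G_\nu\leq 0$; indeed $G_\nu(a)\to+\infty$ as $a\to 1$ when $d\geq 6$. The paper instead bounds $|T_3|$ from above by monotonicity of $g_\nu$, $S_\nu$ and $r^{-d}f_\nu(r)$ (this is precisely what produces the intricate shape of (\ref{eq:Fv'>0 sur ]ym,1[})), and shows that $T_1\sim(d-1)/b\to\infty$ dominates. A second asymptotic analysis near $a=1$ is again needed. Your ``symmetric analysis'' claim glosses over a structurally different argument.
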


\begin{proof}
We use the same notations as in the proof of Theorem \ref{thm:pb aux 2 boules}. First, let us remark that $K$ is a positive increasing function of $a$, and that when $a\in A_I:=[0,a_I[$, thanks to (\ref{eq:equations en I et S}), $k_\nu Ka\in[0,k_\nu K(a_I)a_I[\subseteq [0,k_\nu K(a_S)a_S[=[0,j_\nu[$. On the other hand, $b$ being positive decreasing with respect to $a$, then according to (\ref{eq:equations en I et S}) once again, $k_\nu b\in]k_\nu b_S,k_\nu]=]j_\nu,k_\nu]$. In this context, we see that, on $A_I$, $F_\nu(k_\nu,a)$ is the sum of two rival terms: the first, $f_\nu(k_\nu Ka)$, is positive increasing, and the second, $K^df_\nu(k_\nu b)$, negative decreasing (as a product of a positive increasing and a negative decreasing function) with respect to the variable $a$. Then, we can try using a \enquote{zigzag argument} (as in Lemma 2, case $n=3$ of \cite{ashbaugh-benguria}) to show that $F_\nu(k_\nu,a)\leq0$. More precisely, if one is able to exhibit a sequence of points $0=x_0<x_1<...<x_n<x_{n+1}=a_I$ such that for all $0\leq i\leq n$,
\begin{equation*}
%\tag{\ref{eq:Fv en xi}}
f_\nu(k_\nu K(x_{i+1})x_{i+1})+K(x_i)^df_\nu(k_\nu b(x_i)) \leq 0,
\end{equation*}
then we can conclude that (\ref{eq:Fv<0 2eme occurence}) holds on $A_I$. This is because, for all $a\in A_I$, $a$ belongs to some interval $[x_i,x_{i+1}]$, and hence $F_\nu(k_\nu,a)\leq f_\nu(k_\nu K(x_{i+1})x_{i+1})+K(x_i)^df_\nu(k_\nu b(x_i))$ thanks to the monotonic behavior of each term. Of course, it is impossible to find such sequence since for $i=0$, $f_\nu(k_\nu K(x_{i+1})x_{i+1})>0$ whereas $K(x_i)=0$. Nevertheless, apart from the case $i=0$, one can expect to show this inequality, which exlpains why equation (\ref{eq:Fv en xi}) is an assumption of this theorem. This assumption induces that $F_\nu(k_\nu,a)\leq0$ on $[x_1,a_I[$.

Now if $a\in A_S:=]a_S,1]$, (\ref{eq:equations en I et S}) implies that $k_\nu Ka\in]j_\nu,k_\nu]$ and that $k_\nu b\in[0,k_\nu b_I[\subset[0,j_\nu[$, so that, since $K\leq1$, $F(k_\nu,a)\leq \tilde{F}_\nu(k_\nu,a):=f_\nu(k_\nu Ka)+f_\nu(k_\nu b)$. Then, $\tilde{F}_\nu(k_\nu,a)$ is the sum of the negative increasing function of $a$ $f_\nu(k_\nu Ka)$ and the positive decreasing function of $a$ $f_\nu(k_\nu b)$. Analogously as what have been done for $a\in A_I$, it is clear that (\ref{eq:Fv<0 2eme occurence}) holds on $A_S$ as soon as one is able to find $a_S=y_0<y_1<...<y_m<y_{m+1}=1$ such that $\forall 0\leq i\leq m$
\begin{equation*}
%\tag{\ref{eq:Fv en yi}}
f_\nu(k_\nu K(y_{i+1})y_{i+1})+f_\nu(k_\nu b(y_i))\leq0.
\end{equation*}
Of course, as for (\ref{eq:Fv en xi}), this inequality always fails for $i=m$. But since (\ref{eq:Fv en yi}) is assumed to be true for all $i\neq m$, we get that $F_\nu(k_\nu,a)\leq0$ on $]a_S,y_m]$.

The discussion above shows that it remains to prove $F_\nu(k_\nu,a)\leq0$ only near $a=0$ and $a=1$, that is on $[0,x_1[$ and on $]y_m,1]$. For this purpose, one can compute the derivative of $a\mapsto F_\nu(k_\nu,a)$. Indeed, since (\ref{eq:Fv<0 2eme occurence}) is already known for $a=0$ and $a=1$, it will hold on the whole $[0,x_1[\cup]y_m,1]$ if one can show that
\begin{equation}\label{eq:Fv'<0}
\partial_a F_\nu(k_\nu,a)< 0,\qquad \forall a\in]0,x_1[,
\end{equation}
and that
\begin{equation}\label{eq:Fv'>0}
\partial_a F_\nu(k_\nu,a)> 0,\qquad \forall a\in]y_m,1[.
\end{equation}
Actually - as pointed out in \cite{ashbaugh-benguria} -, since (\ref{eq:Fv<0 2eme occurence}) is known asymptotically in a neighbourhood of $0$ and $1$ (this claim will be proved at the end, but see also Lemma 1 \cite{ashbaugh-benguria}), it is enough to show (\ref{eq:Fv'<0}) and (\ref{eq:Fv'>0}) only when $F_\nu(k_\nu,a)=0$. That's why, for the rest of the proof, we make the ancillary assumption that $F_\nu(k_\nu,a)=0$ unless otherwise mentionned. We have to compute $\partial_a F_\nu(k_\nu,a)$. For readability, we omit from time to time the variable $a$, and write $k$ instead of $k_\nu$. Moreover, we use, according to appendix 1 of \cite{ashbaugh-benguria} and to the identity $2\nu+1=d-1$, that
\begin{equation}\label{eq:fnu'}
\begin{split}
f_\nu'(r) & =\left[\left(\frac{J_{\nu+1}(r)}{J{\nu}(r)}-\frac{I_{\nu+1}(r)}{I{\nu}(r)}\right)\left(\frac{J_{\nu+1}(r)}{J{\nu}(r)}+\frac{I_{\nu+1}(r)}{I{\nu}(r)}\right)+2\right]r^{2\nu+1} \\
& =2r^{d-1}+\left(\frac{J_{\nu+1}(r)}{J_\nu(r)}-\frac{I_{\nu+1}(r)}{I_\nu(r)}\right)f_\nu(r).
\end{split}
\end{equation}
Then,
\begin{align*}
\partial_a F_\nu(k,a)= & 2k(aK'+K)(kKa)^{d-1} + k(aK'+K)\left(\frac{J_{\nu+1}}{J_\nu}(k Ka)-\frac{I_{\nu+1}}{I_\nu}(k Ka)\right)f_\nu(kKa) \\
& +dK'K^{d-1}f_\nu(kb)+2k K^db'(kb)^{d-1}+kK^db'\left(\frac{J_{\nu+1}}{J_\nu}(k b)-\frac{I_{\nu+1}}{I_\nu}(k b)\right)f_\nu(kb).
\end{align*}
The hypothesis $F_\nu(k,a)=0$ made above reads $f_\nu(kKa)=-K^df_\nu(kb)$. This allows writing
\begin{equation}\label{eq:Fv' expression}
\partial_a F_\nu(k,a)=2k^d[(aK'+K)(aK)^{d-1}+b'b^{d-1}K^d]+dK'K^{d-1}f_\nu(kb)+ka^{d-1}G_\nu(a) f_\nu(kKa),
\end{equation}
where
\begin{equation*}
G_\nu(a)=\frac{aK'+K}{a^{2\nu+1}}\left(\frac{J_{\nu+1}}{J_\nu}(kKa)-\frac{I_{\nu+1}}{I_\nu}(kKa)\right)-\frac{b'}{a^{2\nu+1}}\left(\frac{J_{\nu+1}}{J_\nu}(kb)-\frac{I_{\nu+1}}{I_\nu}(kb)\right).
\end{equation*}
We will study each of the three terms involved in (\ref{eq:Fv' expression}). For that we define
$$
T_1:=(aK'+K)(aK)^{d-1}+b'b^{d-1}K^d,\qquad T_2:=dK'K^{d-1}f_\nu(kb),\qquad T_3:=ka^{d-1}G_\nu(a) f_\nu(kKa).
$$
The second term, $T_2$, is the easiest to analyse. Indeed, if $a\in A_I^*:=]0,a_I[$, then $b\in]b_S,1[$, so that (according to (\ref{eq:equations en I et S})) $kb\in]j_\nu,k_\nu[$ and eventually $f_\nu(kb)<0$. On the other hand, if $a\in A_S^*:=]a_S,1[$, because $K$ is increasing and thanks to (\ref{eq:equations en I et S}), $kKa\in]j_\nu,k_\nu[$, and as before $f_\nu(kKa)<0$. The relation $f_\nu(kKa)=-K^df_\nu(kb)$ show in turn that $f_\nu(kb)>0$. In summary, because $K,K'>0$ on $]0,1[$ (see (\ref{eq:K'})), we have $T_2<0$ when $a\in A_I^*$ and $T_2>0$ when $a\in A_S^*$.

Let's tackle $T_1$. With the notation $A=a^{d-1}$ and $B=b^{d-1}$, one computes
\begin{equation}\label{eq:K'}
b'=-\frac{A}{B},\qquad K=\frac{A}{1+B},\qquad K'=\frac{(d-1)(b+1)K^2}{abA},
\end{equation}
and\begin{align*}
T_1=(aK)^d\frac{K'}{K}.
\end{align*}
We see that $K'$ and $T_1$ are products of positive increasing fonctions on $]0,1[$, so that they are themselves positive increasing. Because the properties of $K'$ and $T_1$ will be helpful to conclude the proof, we encapsulate it in the following lemma.
\begin{lemme}\label{lemme: T1}
$K'$ and $T_1$ are positive increasing on $]0,1[$, and $K',T_1\sim\frac{d-1}{b}\to\infty$ when $a\to1$.
\end{lemme}
This result gives that $T_1>0$ on $A_S$ which was desirable. The problem is that we do not have $T_1<0$ on $A_I^*$. Hopefully, as we shall see in a few moment, its positivity near $a=0$ will be compensated by the negativity of $T_2$. Indeed, recall that \mbox{$T_2=dK'K^{d-1}f_\nu(kb)=-d\frac{K'}{K}f_\nu(kKa)$}. To analyse the behavior of $f_\nu(kKa)$ around $a=0$ and then go further in the analysis of $2k^dT_1+T_2$, we use the identities given in \cite[Lemma 2]{ashbaugh-benguria}, which are
\begin{align}
\frac{J_{\nu+1}}{J_\nu}(r) & =2r\sum_{m\geq1}\frac{1}{j_{\nu,m}^2-r^2}, \label{eq:DSE du rapport des Jv}\\
\frac{I_{\nu+1}}{I_\nu}(r) & =2r\sum_{m\geq1}\frac{1}{j_{\nu,m}^2+r^2}.\label{eq:DSE du rapport des Iv}
\end{align}
Then, for all $r\in ]0,j_\nu[$,
$$
f_\nu(r)=2r^d\sum_{m\geq1}\frac{2j_{\nu,m}^2}{j_{\nu,m}^4-r^4}> 4r^d\sum_{m\geq1}\frac{1}{j_{\nu,m}^2}.
$$
But thanks to \cite{giusti-mainardi}, $\sum_{m\geq1}\frac{1}{j_{\nu,m}^2}=\frac{1}{4(\nu+1)}=1/(2d)$. Therefore, for all $a\in A_I^*$, $kKa\in ]0,j_\nu[$ and thus
$$
f_\nu(kKa)> \frac{2}{d}(kKa)^d.
$$
Hence, for all $a\in A_I^*$,
$$
T_2<-2\frac{K'}{K}(kKa)^d=-2k^dT_1.
$$
Then, as desired, $2k^dT_1+T_2< 0$ on $A_I^*$.

We now study $T_3=ka^{d-1}G_\nu(a) f_\nu(kKa)$, the last term in (\ref{eq:Fv' expression}). Notice that thanks to the the fact already mentionned that $f_\nu(kKa)$ is positive when $a\in A_I^*$ and negative when $a\in A_S^*$,  $T_3$ will be negative on $A_I^*$ and positive on $A_S^*$ if and only if $G_\nu< 0$ on $A_I^*$ and on $A_S^*$. Therefore, let us write $G_\nu=G_1+G_2$ where we recall $G_1=(aK'+K)(kK)^{d-1}g_\nu(kKa)$ and $G_2=k^{d-1}g_\nu(kb)$ with
$$
g_\nu(r)=\frac{1}{r^{d-1}}\left[\frac{J_{\nu+1}}{J_\nu}(r)-\frac{I_{\nu+1}}{I_\nu}(r)\right].$$
As in \cite[Lemma 2]{ashbaugh-benguria}, one uses, thanks to (\ref{eq:DSE du rapport des Jv}) and (\ref{eq:DSE du rapport des Iv}), that
$$
g_\nu(r)=4r^{4-d}\sum_{m\geq1}\frac{1}{j_{\nu,m}^4-r^4}=:4r^{4-d}S_\nu(r).
$$
Here, we will need a new lemma.

\begin{lemme}\label{lemme:Snu}
$S_\nu$ is positive increasing on $[0,j_\nu[$ and negative increasing on $]j_\nu,k_\nu]$.
\end{lemme}

\begin{proof}
The fact that $S_\nu$ is increasing is obvious as it is the case for each term involved in the sum. The fact that it is positive on $[0,j_\nu[$ is also easy because this property holds for each term. The fact that $S_\nu$ is negative on $]j_\nu,k]$ is not as easy. It comes from the identites \mbox{$4r^3S_\nu(r)=\frac{J_{\nu+1}}{J_\nu}(r)-\frac{I_{\nu+1}}{I_\nu}(r)$} and \mbox{$\frac{J_{\nu+1}}{J_\nu}(k)+\frac{I_{\nu+1}}{I_\nu}(k)=f_\nu(k)k^{1-d}=0$}, showing that \mbox{$S_\nu(k)< 0\Longleftrightarrow\frac{J_{\nu+1}}{J_\nu}(k)<0$}. But as near $r=0$, \mbox{$J_\nu(r)\sim\frac{1}{\Gamma(\nu+1)}(r/2)^\nu$}, we have that $J_\nu$ is positive near zero. Moreover, it is known that any positive zero of $J_\nu$ is simple, so that $J_\nu$ is negative between $j_{\nu,1}$ and $j_{\nu,2}$. This allows to conclude that $J_\nu(k)<0$ (recall $j_{\nu,1}<k<j_{\nu,2}$). On the other hand, thanks to \cite[Lemma 3]{alkharsani-baricz-pogany}, we have that $k<j_{\nu+1,1}$ and hence that $J_{\nu+1}(k)>0$ since $J_{\nu+1}$ is positive between $0$ and $j_{\nu+1,1}$. Therefore, we conclude that for all $r\in]j_\nu,k[$, $S_\nu(r)< S_\nu(k)<0 $.
\end{proof}

At this point, the proof of (\ref{eq:Fv<0 2eme occurence}) will differ substantially for $a\in A_I^*$ and for $a\in A_S^*$.

\underline{$F_\nu(k,a)\leq 0$ for $a\in ]0,x_1[$}:

Due to the fact that $S_\nu$ is negative increasing on $]j_\nu,k[$, $g_v$ is negative increasing on $]j_\nu,k[$ as the product of a positive nonincreasing function (recall that here $d\geq 4$) and a negative increasing function. Then, because $b$ is decreasing, $G_2$ is negative decreasing on $A_I^*$. On the other hand, $K'$ is positive increasing with respect to $a$ on $]0,1[$. Moreover,
$$
(kK)^{d-1}g_\nu(kKa)=4K^2\frac{(ka)^3}{1+B}S_\nu(kKa).
$$
The right hand member above is the product of a positive increasing function on $A_I^*$ with $S_\nu(kKa)$, which is positive increasing on $A_I$. Indeed, we recall that $a\in A_I\Rightarrow kKa\in[0,j_\nu[$. Eventually, $G_1$ is a product of positive increasing functions on $A_I^*$, and therefore is itself positive increasing on $A_I^*$.

To sum up, on $A_I^*$, $G_\nu$ is the sum of the positive increasing function $G_1$ and the negative decreasing function $G_2$. Therefore, $G_\nu<0$ on $]0,x_1[$ if 
\begin{equation*}
%\tag{\ref{eq:Gv en xi}}
G_1(x_1)+G_2(x_0)\leq0.
\end{equation*}
As (\ref{eq:Gv en xi}) is assumed to hold, we get that $\partial_a F_\nu(k,a)<0$ on $]0,x_1[$ whenever $F_\nu(k,a)=0$. Now, to conclude that (\ref{eq:Fv<0 2eme occurence}) holds on $]0,x_1[$, it remains only to show that (\ref{eq:Fv<0 2eme occurence}) held in some neighbourhood of $0$. To do so, we perform an asymptotic analysis. Observe that, thanks to (\ref{eq:DSE du rapport des Jv}), (\ref{eq:DSE du rapport des Iv}) and \cite{giusti-mainardi},
\begin{equation}\label{eq:asymptote fnu en 0}
f_\nu(r)\underset{r\to0}{\sim}\frac{2}{d}r^d
\end{equation}
On the other hand, using (\ref{eq:fnu'}), we find that $\frac{d}{d\omega}f_\nu(\omega^{\frac{1}{d}})\xrightarrow[\omega\to k^d]{}\frac{2}{d}$, from which we get
\begin{equation}\label{eq:asymptote fnu en k}
f_\nu(r)\underset{r\to k}{\sim}\frac{2}{d}(r^d-k^d).
\end{equation}
As a consequence, $f_\nu(kKa)\underset{a\to0}{\sim} -K^df_\nu(kb)$ and one needs to exhibit the next term in the asymptotic expansions of $f_\nu(kKa)$ and $K^df_\nu(kb)$ in order to conclude. For that, plug (\ref{eq:asymptote fnu en 0}) into (\ref{eq:fnu'}), and use the definition of $S_\nu$ to find
$$
f_\nu'(r)\underset{r\to 0}{=}2r^{d-1}+\frac{8}{d}S_\nu(0)r^{d+3}+o(r^{d+3})
$$
From this we obtain after integration
$$
f_\nu(r)\underset{r\to 0}{=}\frac{2}{d}r^d+\frac{8}{d}S_\nu(0)\frac{r^{d+4}}{d+4}+o(r^{d+4})
$$
Similarly, plug (\ref{eq:asymptote fnu en k}) into (\ref{eq:fnu'}) and find
$$
\frac{d}{d\omega}f_\nu(\omega^{\frac{1}{d}})\underset{\omega\to k^d}{=}\frac{2}{d}+\frac{2}{d}(\omega-k^d)4k^3S_\nu(k)k^{\frac{1-d}{d}}+o(\omega-k^d)
$$
After integration, we obtain
$$
f_\nu(r)\underset{r\to k}{=}\frac{2}{d}(r^d-k^d)+\frac{4}{d}(r^d-k^d)^2k^3S_\nu(k)k^{\frac{1-d}{d}}+o((r^d-k^d)^2)
$$
Eventually, since $(Ka)^{d+4}\underset{a\to 0}{=}o(K^da^{2d})$, we get
\begin{align*}
f_\nu(kKa)+K^df_\nu(kb) & \underset{a\to 0}{=}\frac{8}{d(d+4)}S_\nu(0)(kKa)^{d+4}+\frac{4}{d}S_\nu(k)k^{\frac{2d+1}{d}}(ka)^{2d}K^d+o((Ka)^{d+4}+K^da^{2d})\\
& \underset{a\to 0}{=}\frac{4}{d}S_\nu(k)k^{\frac{2d+1}{d}}(ka)^{2d}K^d+o(K^da^{2d})
\end{align*}
Since $S_\nu(k)<0$ due to Lemma \ref{lemme:Snu}, we see that $F_\nu(k,a)$ is negative when $a$ lies in some neighbourhood of $0$.

\newpage
\underline{$F_\nu(k,a)\leq 0$ for $a\in ]y_m,1[$}:

In the case $a\in A_S^*$, we are not able to show that $G_\nu< 0$, and indeed at least when $d\geq6$, $G_\nu(a)\to\infty$ when $a\to1$. Nevertheless, as we shall see in the next lines, we do have $G_\nu(a)f_\nu(kKa)=-K^dG_\nu(a)f_\nu(kb)\underset{a\to1}{=}O(b^4+b^{d})\to0$. But because $T_1\to\infty$ when $a\to1$, it will dominate $T_3$ in (\ref{eq:Fv' expression}), enforcing $\partial_a F_\nu(k_\nu,a)>0$ near $a=1$. However one has to quantify the predominance of $T_3$ with respect to $T_1$. For this purpose, one estimates, for $a\in ]y_m,1[$, $|G_1(a)|\leq (|K'|+1)k^{d-1}|g_\nu(kKa)|$. Now $g_\nu$ being, as $S_\nu$, negative increasing on $]j_\nu,k[$, we obtain (using also (\ref{eq:K'})) that for $a\in ]y_m,1[$,
\begin{equation}\label{eq:|G1|}
|G_1(a)|< \left[\frac{d-1}{y_m^d}\left(1+\frac{1}{b}\right)+1\right] k^{d-1}|g_\nu(kK(y_m)y_m)|.
\end{equation}
For $G_2$, we use again that $g_\nu(r)=4r^{4-d}S_\nu$, and the fact that $S_\nu$ is positive increasing on $[0,j_\nu[$, yielding, for all $a\in]y_m,1[$,
\begin{equation}\label{eq:|G2|}
|G_2(a)|< 4k^{d-1}(kb)^{4-d}S_\nu(kb(y_m))=k^{d-1}b^{4-d}\frac{g_\nu(kb(y_m))}{b(y_m)^{4-d}}.
\end{equation}
We have also to control $f_\nu(kKa)=-K^df_\nu(kb)$. To do so, we use again (\ref{eq:DSE du rapport des Jv}) and (\ref{eq:DSE du rapport des Iv}) to conclude that $r\mapsto r^{-d}f_\nu(r)$ is positive increasing on $[0,j_\nu[$, and hence that, for $a\in]y_m,1[$,
\begin{equation}\label{eq:|fv|}
|f_\nu(kb)|< (kb)^d(kb(y_m))^{-d}f_\nu(kb(y_m))=b^db(y_m)^{-d}f_\nu(kb(y_m))
\end{equation}
Combining (\ref{eq:|G1|}), (\ref{eq:|G2|}) and (\ref{eq:|fv|}), we have on $]y_m,1[$,
\begin{align*}
|T_3|\leq k|G_\nu||f_\nu(kb)|< & k^d|g_\nu(kK(y_m)y_m)|f_\nu(kb(y_m))b(y_m)^{-d}b^d\left[\frac{d-1}{y_m^d}\left(1+\frac{1}{b}\right)+1\right] \\
& +k^d(g_\nu f_\nu)(kb(y_m))b(y_m)^{-4}b^4 \\
\underset{a\to1}{=} & O(b^4+b^{d-1}).
\end{align*}
The point is that the right-hand member in the previous inequality is an increasing function of the variable $b< b(y_m)$, so that we get for all $a\in]y_m,1[$,
\begin{align*}
|T_3|< & k^d|g_\nu(kK(y_m)y_m)|f_\nu(kb(y_m))\left[\frac{d-1}{y_m^d}\left(1+\frac{1}{b(y_m)}\right)+1\right] +k^d(g_\nu f_\nu)(kb(y_m)).
\end{align*}
On the other, because $T_1$ is positive increasing on $A_S$, (\ref{eq:Fv'>0}) will be ensured as long as one is able to tune $y_m$ such that
\begin{align*}
%\tag{\ref{eq:Fv'>0 sur ]ym,1[}}
0\leq & 2T_1(y_m) -f_\nu(kb(y_m))|g_\nu(kK(y_m)y_m)|\left[\frac{d-1}{y_m^d}\left(1+\frac{1}{b(y_m)}\right)+1\right] -(f_\nu g_\nu)(kb(y_m)).
\end{align*}
Because (\ref{eq:Fv'>0 sur ]ym,1[}) is an assumption, we conclude that $\partial_a F_\nu(k,a)>0$ on $]y_m,1[$ whenever $F_\nu(k,a)=0$. Hence, to show that (\ref{eq:Fv<0 2eme occurence}) holds on $]y_m,1[$, it remains to show that it does asymptotically near $a=1$. Recalling (\ref{eq:asymptote fnu en 0}), one finds
$$
f_\nu(kb)\underset{a\to1}{\sim}\frac{2}{d}(kb)^d.
$$
On the other hand, (\ref{eq:asymptote fnu en k}) gives
$$
f_\nu(kKa)\underset{a\to1}{\sim}\frac{2}{d}k^d((Ka)^d-1).
$$
Note that, since $K\xrightarrow[a\to1]{}1$,
$$
(Ka)^d-1=(Ka-1)\sum_{n=0}^{d-1}(Ka)^n\underset{a\to1}{\sim}-d\frac{b^d+B}{1+B}\underset{a\to1}{\sim}-db^{d-1}.
$$
From this we deduce that $K^df_\nu(kb)\underset{a\to1}{=}o(f_\nu(kKa))$ and that $F_\nu(k,a)\underset{a\to1}{\sim}-2k^db^{d-1}$. As a consequence, $F_\nu(k,a)<0$ in some neighbourhood of $a=1$, which concludes.
\end{proof} %You need a file 'FileName.tex' for this.

\section{Numerical computations}\label{sec:numerique}

In this section, we would like to check the assumptions (\ref{eq:condition necessaire kv jv}), (\ref{eq:Fv en xi}), (\ref{eq:Gv en xi}), (\ref{eq:Fv en yi}), and (\ref{eq:Fv'>0 sur ]ym,1[}) of Theorem \ref{thm:pb aux 2 boules} for several dimensions. First, it is needed to compute the quantitities $j_v$, $k_v$, $a_I$ and $a_S$. Hopefully, the $m$-th first positive zero $j_{\nu,m}$ of the Bessel function $J_\nu$ can be approximated by computing the eigenvalues of some matrix as explained in \cite{ikebe-kikuchi-fujishiro} Theorem 2.1 and 2.2. On the other hand, $k_\nu$ is the only zero of the increasing function $f_\nu$ between $j_{\nu,1}$ and $j_{\nu,2}$ (recall that $j_{\nu,i},i\in\N^*$ are the positive zeros of $J_\nu$), hence it can be obtained by dichotomy. Similarly, $a_S$ being the only zero of the increasing function $a\in[0,1]\mapsto aK(a)-\frac{j_\nu}{k_\nu}$, it shall also be computed by dichotomy. Eventually, $a_I$ is given by the formula $a_I^d=1-\frac{j_\nu^d}{k_\nu^d}$.

Then, we need to find, on the one hand, the sequence of points $(x_1,...,x_n)$ fulfilling hypotheses (\ref{eq:Fv en xi}) and (\ref{eq:Gv en xi}); and, on the other hand, the sequence $(y_1,...,y_m)$ fulfilling hypotheses (\ref{eq:Fv en yi}), and (\ref{eq:Fv'>0 sur ]ym,1[}). It seems not trivial to give an efficient algorithm exhibiting automatically these points, and we rather find the two sequences \enquote{manually}. That's why we restricted the study to the dimensions $d=4,5,6,7,8,9$. We precise that the numerical computations have been performed in Python 3, using the package \verb|special| from the module \verb|scipy|, which contains the functions \verb|jv|, and \verb|iv|, corresponding to the bessel functions $J_\nu$ and $I_\nu$.

Let us mention that while checking (\ref{eq:Fv en xi}), one has to use approximations of $a_I$ and $k_\nu$ by the right, namely $a_I^+$ and $k_\nu^+$, rather than by the left. This is because (see the proof of Theorem \ref{thm:Fv<0} for details) $(k,a)\mapsto f_\nu(k K a)$ is increasing in each variable on $[0,k_\nu^+]\times [0,a_I^+]$ and $(k,a)\mapsto Kf_\nu(k b)$ is increasing in $k$ and decreasing in $a$ on $[0,k_\nu^+]\times [0,a_I^+]$. Therefore, instead of (\ref{eq:Fv en xi}), one will try to verify
\begin{equation}\label{eq:Fvi}
\tag{\ref{eq:Fv en xi}'}
F_{\nu,i}:=f_\nu(k_\nu^+ K(x_{i+1}) x_{i+1})+K(x_i)^df_\nu(k_\nu^+b(x_i))\leq 0.
\end{equation}
Analogously, when checking (\ref{eq:Fv en yi}), one has to use an approximation of $a_S$ by the left, that we denote $a_S^-$; but still an approximation of $k_\nu$ by the right. That's why (\ref{eq:Fv en yi}) turns into
\begin{equation}\label{eq:Fvi'}
\tag{\ref{eq:Fv en yi}'}
F_{\nu,i}':=f_\nu(k_\nu^+K(y_{i+1})y_{i+1})+f_\nu(k_\nu^+b(y_i))\leq 0.
\end{equation}
Similar issues have to be taken into account when validating (\ref{eq:Gv en xi}) and (\ref{eq:Fv'>0 sur ]ym,1[}). More precisely, in (\ref{eq:Gv en xi}), $G_1$ should be replaced by some $\tilde{G}_1$ defined as $\tilde{G}_1(a)=(aK'+K)(k_\nu^+K)^{d-1}g_\nu(k_\nu^+Ka)$; $G_2$ should be replaced by $\tilde{G}_2$ defined as $\tilde{G}_2(a)=(k_\nu^-)^{d-1}g_\nu(k_\nu^+b)$; and thus (\ref{eq:Gv en xi}) should be replaced by
\begin{equation}\label{eq:Gvi}
\tag{\ref{eq:Gv en xi}'}
G_{\nu,0}:=\tilde{G}_1(x_1)+\tilde{G}_2(x_0)\leq0.
\end{equation}
On the other hand (\ref{eq:Fv'>0 sur ]ym,1[}) should be replaced by the condition
\begin{align}\label{eq:Gvi'}
\tag{\ref{eq:Fv'>0 sur ]ym,1[}'}
0\leq G_{\nu,m}':= 2T_1(y_m) -f_\nu(k_\nu^+b(y_m))|g_\nu(k_\nu^-K(y_m)y_m)|\left[\frac{d-1}{y_m^d}\left(1+\frac{1}{b(y_m)}\right)+1\right]-(f_\nu g_\nu)(k_\nu^+b(y_m)).
\end{align}
Eventually, since $a_S$ depends on $j_\nu/k_\nu$, for computing $a_S^-$ one has to use $j_\nu^-$ and $k_\nu^+$. The same conclusion holds for computing $a_I^+$.

In Table \ref{tab:condition nécessaire}, is given the value $2\frac{j_\nu^d}{k_\nu^d}+\frac{j_\nu}{k_\nu}$ for several dimensions, which shows that (\ref{eq:condition necessaire kv jv}) is fulfilled.

\begin{table}[h!]
\centering
\begin{equation*}
\begin{array}{c||c}
d & 2j_\nu^d/k_\nu^d+j_\nu/k_\nu\\
\hline
4 & 1.7848 \\
5 & 1.7563 \\
6 & 1.7345 \\
7 & 1.7172 \\
8 & 1.7029 \\
9 & 1.6910
\end{array}
\end{equation*}
\caption{Values of $2\frac{j_\nu^d}{k_\nu^d}+\frac{j_\nu}{k_\nu}$ in terms of dimension $d$.}
\label{tab:condition nécessaire}
\end{table}

Table \ref{tab:jv et kv} contains the values of $j_\nu^-$, $k_\nu^-$, $k_\nu^+$, $a_I^+$ and $a_S^-$.

\begin{table}[h!]
\centering
\begin{equation*}
\begin{array}{c||c|c|c|c|c}
d & j_\nu^- & k_\nu^- & k_\nu^+ & a_I^+ & a_S^- \\
\hline
4 & 3.831 & 4.610 & 4.611 & 0.8507 & 0.9836 \\
5 & 4.493 & 5.267 & 5.268 & 0.8869 & 0.9879 \\
6 & 5.135 & 5.905 & 5.906 & 0.9101 & 0.9907 \\
7 & 5.763 & 6.529 & 6.530 & 0.9259 & 0.9927 \\
8 & 6.380 & 7.143 & 7.144 & 0.9373 & 0.9940 \\
9 & 6.987 & 7.748 & 7.749 & 0.9459 & 0.9951
\end{array}
\end{equation*}
\caption{Values of $j_\nu^-$, $k_\nu^-$, $k_\nu^+$, $a_I^+$, and $a_S^-$ in terms of dimension $d$.}
\label{tab:jv et kv}
\end{table}

In Table \ref{tab:Gv0, Fvi, Fvi' et Gvm'} are computed for several dimensions the approximate quantities $G_{\nu,0}$, $F_{\nu,i}$, $F_{\nu,i}'$ and $G_{\nu,m}'$ appearing in (\ref{eq:Fvi}), (\ref{eq:Gvi}), (\ref{eq:Fvi'}) and (\ref{eq:Gvi'}). We observe that appart from the last column, the value reported in each cell is negative, meaning that inequalities (\ref{eq:Fvi}), (\ref{eq:Gvi}), (\ref{eq:Fvi'}) are satisfied. Moreover, the fact that the values in the last column are positive show that inequality (\ref{eq:Gvi'}) is also true. This justifies that assumptions \ref{it:hypothese Fv<0 sur AI} and \ref{it:hypothese Fv<0 sur AS} of Theorem \ref{thm:pb aux 2 boules} are fulfilled. 

\begin{table}[h!]
\centering
\begin{equation*}
\begin{array}{c||c|c|c|c|c|c|c|c|c|c|c|c|c}
d & G_{\nu,0} & F_{\nu,1} & F_{\nu,0}' & G_{\nu,1}' \\
\hline
4 & -1.232 & -6.234 & -2.682 & 26.92 \\

5 & -1.040 & -76.82 & -52.58 & 27.50 \\

6 & -0.9603 & -123.8 & -840.3 & 28.84 \\

7 & -0.8131 & -1135 & -1336\cdot 10^1 & 30.36 \\

8 & -0.7514 & -3126 & -2165\cdot 10^2 & 31.83 \\

9 & -0.6430 & -2362\cdot 10^1 & -3803\cdot 10^3 & 33.12
\end{array}
\end{equation*}
\caption{Values of $G_{\nu,0}$, $F_{\nu,i}$, $F_{\nu,i}'$ and $G_{\nu,m}'$ in terms of dimension $d$. For each dimension, it is chosen $n=1$, $m=1$, and $y_1=0.999$. The point $x_1$ chosen depends on the dimension: for $d=4$, $x_1=0.83$, for $d=5$, $x_1=0.88$, for $d=6$, $x_1=0.90$, for $d=7$, $x_1=0.92$, for $d=8$, $x_1=0.93$, and for $d=9$, $x_1=0.94$.}
\label{tab:Gv0, Fvi, Fvi' et Gvm'}
\end{table}

%%%%%%%%%%%%%%%%%%%%%%%%%%%%%%%%%%%%%%%%%%%%%%%%%%%%%%%%%%%%%
%% Aknowledgements
%%%%%%%%%%%%%%%%%%%%%%%%%%%%%%%%%%%%%%%%%%%%%%%%%%%%%%%%%%%%%

\newpage
\renewcommand{\abstractname}{Aknowledgements}
\begin{abstract}
\noindent Let me thank Enea Parini and François Hamel for their valuable advice and remarks, in particular regarding Talenti's comparison principle.
\end{abstract}

%%%%%%%%%%%%%%%%%%%%%%%%%%%%%%%%%%%%%%%%%%%%%%%%%%%%%%%%%%%%%
%% BIBLIOGRAPHY AND OTHER LISTS
%%%%%%%%%%%%%%%%%%%%%%%%%%%%%%%%%%%%%%%%%%%%%%%%%%%%%%%%%%%%%
%% A small distance to the other stuff in the table of contents (toc)
%\addtocontents{toc}{\protect\vspace*{\baselineskip}}

%% The Bibliography
%% ==> You need a file 'literature.bib' for this.
%% ==> You need to run BibTeX for this (Project | Properties... | Uses BibTeX)
%\addcontentsline{toc}{chapter}{Bibliographie} %'Bibliography' into toc
%\nocite{*} %Even non-cited BibTeX-Entries will be shown.
\bibliographystyle{alpha} %Style of Bibliography: plain / apalike / amsalpha / ...
\bibliography{../biblio} %You need a file 'literature.bib' for this.

%\printbibliography
%\printbibliography[
%heading=bibintoc
%title={Références}
%]

%% The List of Figures
%\clearpage
%\addcontentsline{toc}{chapter}{List of Figures}
%\listoffigures

%% The List of Tables
%\clearpage
%\addcontentsline{toc}{chapter}{List of Tables}
%\listoftables

\end{document}